\RequirePackage[english]{babel}
\documentclass{amsart}
\usepackage{amsthm,amsfonts,enumerate} 

\newcommand{\bP}{\mathbb{P}}

\newcommand{\bx}{\mathbf{x}}

\theoremstyle{plain}
\newtheorem{thm}{Theorem}[section]
\newtheorem*{thms}{Theorem}

\newtheorem{pr}[thm]{Proposition}
\newtheorem{co}[thm]{Corollary}

\newtheorem{lem}[thm]{Lemma}

\theoremstyle{remark}
\newtheorem{re}[thm]{Remark}
\newtheorem{ex}[thm]{Example}

\theoremstyle{definition}
\newtheorem{de}[thm]{Definition}

\numberwithin{equation}{section}

\DeclareMathOperator{\Tan}{Tan}
\DeclareMathOperator{\expdim}{expdim}
\DeclareMathOperator{\sym}{Sym}
\DeclareMathOperator{\Proj}{Proj}
\DeclareMathOperator{\im}{Im}
\DeclareMathOperator{\Car}{Car}

\DeclareMathOperator{\Hom}{Hom}

\begin{document}

\title{On varieties with higher osculating defect}
\author{Pietro De Poi}\email{pietro.depoi@uniud.it}
\address{Dipartimento di Matematica e Informatica, Universit\`a degli Studi di Udine, via delle Scienze, 206, 33100 Udine, Italy}
\author{Roberta Di Gennaro}\email{digennar@unina.it}
\author{Giovanna Ilardi}\email{giovanna.ilardi@unina.it}
\address{Dipartimento di Matematica e Applicazioni, Universit\`a degli Studi di Napoli ``Federico II'', 80126 Napoli, Italy}
\thanks{The first author was partially supported by  MiUR,
project ``Geometria delle variet\`a algebriche e dei loro spazi di moduli'' and by the University of Naples ``Federico II'', 
project ``F.A.R.O. 2010: Algebre di Hopf, differenziali e di vertice in geometria, 
topologia e teorie di campo classiche e quantistiche''.}

\subjclass[2010]{Primary: 53A20. Secondary: 14N15; 51N35}
\keywords{Algebraic varieties, osculating defects, higher fundamental forms, Laplace equations, scrolls}

\begin{abstract}
In this paper, using the method of moving frames, we generalise some of Terracini's results on varieties with tangent defect.
In particular, we characterise varieties with higher order osculating defect in terms of Jacobians of higher fundamental forms and moreover
we characterise varieties with  ``small'' higher fundamental forms as contained in scrolls.
\end{abstract}

\maketitle

\section*{Introduction}

The starting point of this paper is given by the classical papers of Terracini \cite{T}, \cite{T1}, \cite{T2}, \cite{T3}
on the description of the $k$-dimensional varieties $V$  of $\bP^N(\mathbb C)$, $(N>2k)$, such that the embedded tangent variety $\Tan(V)$ is defective, i.e.
it has dimension less than $2k$ ($2k-\ell$ with $\ell>0$). In \cite{T1}, Terracini links this problem
to the determination of the linear systems of quadrics such that the  Jacobian matrix has rank $k-\ell$.
After Terracini, there have been further classical papers on this subject: here we cite only as an example 
\cite{De1},
\cite{M1}, \cite{M2}, \cite{M3}.


Terracini proved results bounding the tangent defect of $V$ and on
the structure of the varieties satisfying a certain number of Laplace equations. To state the results, we will say that
$V$ \emph{satisfies $\delta_s$ Laplace equations of order $s$} if---given a local parametrisation ${\mathbf{x}}(t_1,\dots,t_k)=(x_1(t_1,\dots,t_k),\dots,x_N(t_1,\dots,t_k))$ and denoted
 by ${\mathbf{x}}^I=\frac{\partial^{\vert I\vert}{\mathbf{x}}(t_1,\dots,t_k)}
{\partial t_1^{i_1}\dots\partial t_k^{i_k}}$ the partial derivatives of $\mathbf x$
---it
satisfies the following partial differential equations:
$$\sum_{0\leq\vert I\vert \leq s} E_I^{(h)}{\mathbf{x}}^{I} =0, \hspace{1cm} h=1,\dotsc, \delta_s$$ where
at least a $E_I^{(h)}\neq 0$ with $|I|=s$ and these equations are linearly independent.

In this paper, we apply the method of moving frames, developed by Darboux, Cartan  and others,
to understand the relationship between the algebraic geometry of subvarieties
of the projective space and their local projective differential geometry.
This was a project of the classical geometers,
revived by  Akivis and Goldberg (see \cite{AG} and references therein)  and Griffiths and Harris (see \cite{G-H})
and more recently by Landsberg (see for example \cite{L}, \cite{L1} and with other authors, see for example \cite{IL}, \cite{LR}).

We generalise Terracini's Theorem
to varieties with defect of higher order studying linear system of hypersurfaces (the fundamental forms)
instead of Laplace equations of every order satisfied by the variety. We prove the following.

\begin{thms}
Let $V \subseteq \bP^N$  be a  $k$-dimensional irreducible variety whose $t$-th fundamental form has dimension $k-\ell-1$, 
with $\ell >0$; then $V$ has $(t-1)$-osculating defect $\geq \ell$ and moreover it holds
\begin{enumerate}[1.]

 \item $V$ is contained in a $d$-dimensional scroll $S(\Sigma^h_r)$ in $\bP^r$, with $0 \leq h \leq k-\ell$ and $k-h\le r$.
  \item The tangent $\bP^{d}$'s  to $S(\Sigma^h_r)$ at the smooth points of a generic $\bP^r$ of $S(\Sigma^h_r)$ are contained in a linear space
of dimension $d_t-h=d_{t-1}+ k-\ell-h$---where $d_t$ is the dimension of the $t$-th osculating space to $V$ at its general point.
In particular, $r\le d\le d_{t-1}+ k-\ell-h$.
\end{enumerate}
\end{thms}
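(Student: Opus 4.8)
The plan is to run the entire argument through the method of moving frames adapted to the osculating flag of $V$, converting the hypothesis on the $t$-th fundamental form into a rank condition on the structure forms and then integrating the resulting distribution. Over a general point $x\in V$ I would choose a projective frame $(A_0,\dots,A_N)$ of $\bP^N$ so that $A_0$ represents $x$ and, for each $s$, the points $A_0,\dots,A_{d_s}$ span the $s$-th osculating space $T^{(s)}_x$, where $d_s=\dim T^{(s)}_x$. Pulling back the Maurer--Cartan equations $dA_i=\sum_j\omega_i^{\,j}A_j$ to $V$, the blocks of the connection matrix carrying the level $T^{(s-1)}$ into $T^{(s)}/T^{(s-1)}$ are exactly the $s$-th fundamental forms $F_s$, read as symmetric tensors, equivalently as linear systems of degree-$s$ hypersurfaces on $\bP(T_xV)=\bP^{k-1}$. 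With this dictionary the hypothesis that the $t$-th fundamental form has dimension $k-\ell-1$ reads $\rk F_t=k-\ell$, that is $d_t-d_{t-1}=k-\ell$, which is precisely the equality recorded at the end of the statement.

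Next I would prove the defect inequality. Consider the osculating Gauss map $\gamma_{t-1}\colon V\to\mathbb G(d_{t-1},N)$, $x\mapsto T^{(t-1)}_x$. By the structure equations the derivative of $\gamma_{t-1}$ is governed by the contraction of $F_t$, so that $\rk d\gamma_{t-1}\le\rk F_t=k-\ell$; hence, setting $h:=\dim\overline{\gamma_{t-1}(V)}=\rk d\gamma_{t-1}$, one gets $h\le k-\ell$ (and $h\ge 0$ trivially). Since the value of $\gamma_{t-1}$ determines the osculating space, $O_{t-1}(V):=\bigcup_{x}T^{(t-1)}_x$ is swept by an $h$-dimensional family of $d_{t-1}$-planes, whence $\dim O_{t-1}(V)\le h+d_{t-1}$. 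Comparing with the expected value $k+d_{t-1}$ yields $(t-1)$-osculating defect $\ge k-h\ge\ell$, which generalises Terracini's statement (the case $t=2$, where $O_{1}(V)=\Tan(V)$).

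For the scroll I would integrate the kernel distribution. The general fibre of $\gamma_{t-1}$ has dimension $k-h\ge\ell$, and a consequence of $d^2=0$ applied to the adapted frame---the higher-osculating analogue of the linearity of the fibres of the ordinary Gauss map---is that $T^{(t-1)}_x$ is constant along each fibre and that the fibre is an open subset of a linear space $\bP^{k-h}\subseteq V$. These linear spaces sweep out $V$, so $V$ is ruled and sits inside the scroll $S(\Sigma^h_r)$ whose base $\Sigma^h=\overline{\gamma_{t-1}(V)}$ has dimension $h$ and whose ruling $\bP^r$ is the span of the corresponding fibre; thus $k-h\le r$. Finally, along a fixed ruling the constancy of $T^{(t-1)}_x$ confines the tangent $\bP^d$'s of $S$ to the span of $T^{(t-1)}_x$ together with the $F_t$-directions picked up by the single additional differentiation, i.e.\ to $T^{(t)}_x$ after discarding the $h$ directions absorbed by the base, a linear space of dimension $d_t-h=d_{t-1}+k-\ell-h$. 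This forces $r\le d\le d_{t-1}+k-\ell-h$ and completes the proof.

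The main obstacle is this last step. Both the integrability of the kernel distribution with \emph{linear} leaves and the confinement of the tangent spaces of $S$ inside the predicted $(d_t-h)$-dimensional space are first-order consequences of the conditions $d\omega=-\omega\wedge\omega$, but extracting linearity in the higher-osculating range---rather than merely for the first-order Gauss map---requires careful bookkeeping of exactly which Maurer--Cartan forms vanish on the distribution and which survive into $T^{(t)}_x$. This is where the bulk of the computation, and the genuine novelty beyond Terracini's second-order case, resides.
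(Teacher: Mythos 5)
Your reduction of the hypothesis to $d_t-d_{t-1}=k-\ell$ is correct, but the argument breaks at the very next step: the inequality $\rk d\gamma_{t-1}\le k-\ell$ does not follow from the $t$-th fundamental form spanning only a $(k-\ell-1)$-dimensional linear system. By Theorem \ref{13} the differential of $\gamma^{t-1}$ is indeed the $t$-th fundamental form, but its rank as a linear map on $T_PV$ equals $k$ minus the dimension of the \emph{common vertex} of the degree-$t$ forms in $|I^t|$, not $k$ minus the codimension of their span. A smooth hypersurface $V\subset\bP^{k+1}$ with nondegenerate second fundamental form already defeats the claim for $t=2$: there $\dim|II|=0$, so $k-\ell=1$, yet the Gauss map $\gamma^{1}$ is finite and $h=\rk d\gamma^{1}=k$; your counting then yields the vacuous bound ``defect $\ge k-h=0$'' instead of the asserted $\ge\ell=k-1$. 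The same circle of examples shows that the fibres of a higher Gauss map need not be linear (if $V$ spans exactly $\bP^{d_t}$, then $\gamma^{t}$ is constant and its fibre is all of $V$), so the step ``the fibre is an open subset of a linear space $\bP^{k-h}\subseteq V$'' is false; the statement only asserts that the fibre \emph{spans} a $\bP^{r}$ with $r\ge k-h$, possibly strictly.

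The paper's route avoids both problems. The defect bound comes from Lemma \ref{lemsbagliato}: the tangent space to $\Tan^{t-1}(V)$ at a general $P\in V$ lies inside $\tilde T^{(t)}_P(V)$, whose dimension is $d_{t-1}+k-\ell$ by Corollary \ref{cor_dimensione ff}, so $\dim\Tan^{t-1}(V)\le\expdim\Tan^{t-1}(V)-\ell$ with no reference to the rank of any Gauss map. For the scroll, $h$ is the dimension of the image of the \emph{$t$-th} (not the $(t-1)$-th) Gauss map, and the inequality $h\le k-\ell$ is obtained dually, from $\tilde T^{(t-1)}_P(V)\subset C^{(1)}_{t,P}(V)$ together with $\dim\check V^{(t)}=h+N-1-d_t$, as in \eqref{eq:equazione}; the linearity of the rulings and the confinement of the tangent spaces in part 3 likewise come from the characteristic spaces $C^{(s)}_{t,P}(V)$ of the $t$-th dual variety (Lemma \ref{lemma} and Corollary \ref{cor:lem}), an apparatus your proposal does not introduce. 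You flag the last step as the ``main obstacle,'' but the obstruction already occurs at the rank estimate, and the missing idea is precisely this dual-variety/characteristic-space machinery.
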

See Theorem \ref{mainthm}. 
\\ Moreover, we have obtained some classifications for the extremal cases of the preceding theorem; for example, we show
that, if $\ell=k-1$ and $t=2$, then $V$ is either a hypersurface or a developable $\bP^{k-1}$-bundle.

Successively, Terracini studied again varieties with tangent defect, but satisfying a number of Laplace equations less than  $\binom{k}{2} + l$
in \cite{T1}.

We generalise also this result as follows, in terms of fundamental forms:
\begin{thms}
Let $V \subseteq \bP^N$  be a  $k$-dimensional irreducible variety.
  $V$ has $t$-th osculating defect $o_t=\ell>0$ and the $(t+1)$-th fundamental form has dimension at least $k-\ell$ if and only if the
Jacobian matrix of the $(t+1)$-th fundamental form of $V$ has rank $k-\ell$.
\end{thms}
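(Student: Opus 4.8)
The plan is to realise both the osculating defect and the Jacobian rank as invariants of the single tensor $II^{(t+1)}\in N_{t+1}\otimes\sym^{t+1}(T_xV^*)$, where $T_xV$ is the tangent space at a general point and $N_{t+1}=O_{t+1}/O_t$ is the $(t+1)$-th normal space, and to bridge them with the prolongation (Maurer--Cartan) relations of a moving frame adapted to the osculating flag $O_1\subset O_2\subset\cdots$. This is the exact higher-order analogue of Terracini's treatment of the tangent defect, which is the case $t=1$, where $II^{(2)}$ is the classical system of quadrics.

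First I would fix a Darboux frame adapted to the osculating flag at a general point, so that $II^{(t+1)}$ is read off the appropriate off-diagonal block of the connection matrix and is represented by $\dim N_{t+1}$ forms $F^\mu=\sum F^\mu_{i_1\cdots i_{t+1}}x^{i_1}\cdots x^{i_{t+1}}$ of degree $t+1$ on $\bP(T_xV)=\bP^{k-1}$. The entries of the Jacobian matrix at a point $[x]$ are the contractions $\partial_jF^\mu(x)=(t+1)\sum F^\mu_{ji_2\cdots i_{t+1}}x^{i_2}\cdots x^{i_{t+1}}$, so that the $j$-th column is the image of the contraction map $c_x\colon T_xV\to N_{t+1}$, $v\mapsto II^{(t+1)}(v,x,\dots,x)$. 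Hence $\rk J$ equals the generic rank of $c_x$, and its corank $\dim\ker c_x=k-\rk J$ is the dimension of the directions in which $c_x$ degenerates.

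The heart of the argument is the identification $\dim\ker c_x=o_t$ at a general $[x]$. Differentiating the structure equations for the frame vectors spanning $O_t$ shows that, when the point moves with tangent $v$, the first-order motion of $O_t$ modulo $O_t$ itself is measured exactly by $c_x(v)\in N_{t+1}$; the directions leaving $O_t$ infinitesimally fixed are therefore $\ker c_x$, and these are precisely the directions detected by the $t$-osculating defect, so $\rk J=k-o_t$. Since also $\rk J\le\dim N_{t+1}$, the rank can equal $k-\ell$ only when the $(t+1)$-th fundamental form has at least $k-\ell$ independent forms, i.e.\ when $\dim|II^{(t+1)}|\ge k-\ell$. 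The two implications then close without circularity: for the forward direction, $o_t=\ell$ gives $\dim\ker c_x=\ell$ and the dimension hypothesis supplies enough independent columns, so $\rk J=k-\ell$; for the converse, $\rk J=k-\ell$ first forces at least $k-\ell$ independent rows, hence $\dim|II^{(t+1)}|\ge k-\ell$, and then the identification $\rk J=k-o_t$ yields $o_t=\ell$.

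The main obstacle I anticipate is exactly the equality $\dim\ker c_x=o_t$ at a \emph{generic} $[x]$: one must show that the intrinsic higher osculating defect coincides with the infinitesimal corank of the contraction, and that no spurious rank drop is introduced by the fundamental form apart from the genuine osculating degeneracy. This requires using the prolongation relations among the coefficients $F^\mu_{i_1\cdots i_{t+1}}$ together with the dimension hypothesis to control which derived coefficients survive modulo $O_t$; it is the delicate bookkeeping that replaces Terracini's explicit quadric computation in the higher-order setting.
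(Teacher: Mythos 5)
Your reduction of the Jacobian rank to the generic rank of the contraction $c_w\colon v\mapsto I^{t+1}(v,w,\dots,w)$ is correct and matches the paper's setup, but the step you yourself flag as ``the main obstacle'' --- the identity $\dim\ker c_w=o_t$ at a generic $w$ --- is the entire content of the theorem, and the justification you sketch for it is wrong at both ends. First, the first-order motion of $\tilde T^{(t)}_P(V)$ as a point of the Grassmannian is measured by the full homomorphism $d\gamma^t_P(v)\in\Hom(\tilde T^{(t)}_PV,N^{t+1}_P(V))$, i.e.\ by $I^{t+1}(v,w_1,\dots,w_t)$ for \emph{all} $w_i$, not by the single contraction $c_w(v)$; the directions leaving $\tilde T^{(t)}_P(V)$ infinitesimally fixed form the fibre of the Gauss map, of dimension $k-h$, which is in general strictly smaller than $\ker c_w$. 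Second, the $t$-th osculating defect is not that fibre dimension either: $\Tan^t(V)$ is swept out by points $Q=A_0+\sum_{|I|=t}\lambda^{(I)}d^IA_0$, and differentiating $Q$ shows that $o_t$ is the corank of $v\mapsto I^{t+1}(v,\Lambda)$ for a \emph{generic symmetric tensor} $\Lambda\in\sym^{(t)}T_P(V)$, whereas $\rk J$ at a generic $[w]$ is the rank of the contraction with the special tensor $w^{\otimes t}$ lying on the Veronese. Semicontinuity therefore gives only $\rk J\le k-o_t$; the reverse inequality is the nontrivial assertion, and it is exactly here that the hypothesis $\dim|I^{t+1}|\ge k-\ell$ must intervene (you invoke it only to count independent rows and columns, where incidentally rank $k-\ell$ forces only $\dim|I^{t+1}|\ge k-\ell-1$).

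The paper closes this gap by a mechanism absent from your sketch: the $\ell$ defect relations at a generic $Q$ are rewritten as Laplace equations of order $t+1$ whose associated degree-$(t+1)$ forms are reducible, namely $(\sum_j a_{\alpha,j}v_j)\cdot g$ with a common degree-$t$ factor $g=\sum_{|I|=t}\lambda^{(I)}\mathbf v_I$; apolarity (Proposition \ref{prop_apolarity}) then converts this into the vanishing, \emph{as polynomials}, of the $\ell$ corresponding directional derivatives of every generator of $|I^{t+1}|$, i.e.\ into $\ell$ identically zero columns of the Jacobian matrix --- a statement that survives evaluation at a generic point of the Veronese and yields $\rk J\le k-\ell$ together with the complementary bound from the dimension hypothesis. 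Some such bridge between contraction with a generic $\Lambda$ and contraction with a generic power $w^t$ (via apolarity or the prolongation relations \eqref{eq:tanti}) is indispensable; as written, your argument establishes only one of the two inequalities and leaves the equivalence unproved.
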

See Theorem \ref{thm terr gen}.

The article is structured as follows. In Section \ref{notation_sec} we give the basic notations and preliminaries, and we show some 
results that we need. Many of them either are natural generalisations of known results (mainly from \cite{G-H}) or are not very 
surprising; nevertheless, we think that including them can be useful by lack of adequate references.  
More precisely, after fixing some notations and recalling the basic definitions such as
Laplace equations, Darboux frames, the second fundamental form and apolarity, we prove the relation between the dimension of the second fundamental form
and the number of Laplace equations of order two for a $k$-dimensional
projective variety $V\subset\bP^N$: more precisely, if $V$ satisfies $\delta_2$ Laplace equations,
then the second fundamental form has dimension $\binom{k+1}{2}-1-\delta_2$.

Then, after recalling the definition of the osculating spaces
of every order, we link them to the higher fundamental forms proving in particular that the Jacobian system of the $t$-th fundamental form is contained in
the $(t-1)$-th fundamental form. We also prove the equivalence between the dimension of the $t$-th fundamental form
and the number of Laplace equations of order $t$, extending the above result for the second fundamental form.

We recall the definition of the $t$-th Gauss map and we show that its differential can be interpreted as the $t$-th fundamental form. Finally, we introduce
the definition of $t$-th dual variety of $V$ and we prove some lemmas about it.

In Section \ref{sec:terr} we state and prove the main theorems of the article, i. e. Theorems \ref{mainthm} and \ref{thm terr gen}. In order to do so, we also prove
a lemma on the tangent space of the higher osculating variety of $V$.

\section{Notation and preliminaries}\label{notation_sec}
We will follow notation as in \cite{G-H}, \cite{H}. Let $V \subset \bP^N$ be a projective variety of dimension $k$
over $\mathbb C$ that will be always irreducible. For any point $P\in V$ we use the following notation:
%
%
%
$\widetilde {T}_P(V)\subset \bP^N$
is the embedded tangent projective space to $V$ in $P$ and
$T_P(V)$ is the Zariski tangent space. 

With abuse of notation, we agree with \cite{G-H} by identifying the embedded tangent space in $\bP^N$ with its affine cone in $\mathbb C^{N+1}$ ,
in order to avoid any complicating of writing. With this interpretation $T_P(V)\cong \frac{\widetilde T_P(V)}{\mathbb C}$.
By $\mathbb G(N,t)$ we denote the Grassmannian of $t$-planes of $\bP^N$.
%
%
%
%
%
%
%

We define  $\Tan(V) :=\overline{\bigcup_{P \in V_0}\tilde {T}_P(V) }$ where $V_0\subset V$ is the smooth locus of $V$. $\Tan(V)$ has expected dimension $2k$,
and the case in which $\Tan(V)$ is less than expected was studied by many algebraic geometers:
classically Terracini (see \cite{T1}) linked the dimension $2k-\ell$ of $\Tan(V)$ with the number of Laplace equations that the variety $V$ satisfies,
and more recently Griffiths and Harris \cite{G-H} analysed the same dimension in terms of second fundamental form $II$.

Actually, for studying Laplace equations, it is usual to consider a parametric representation of $V$;
instead in \cite{G-H} and \cite{IL} the authors use the language of the Darboux frames. So, our first step is to understand in this language what means that
$V$ satisfies a Laplace equation.

We  begin to expose the definition of Laplace equations. Let  $V \subseteq \bP^N$ and let
$ {\mathbf x} = {\mathbf  x} (t_1 , \dotsc , t_k) = {\mathbf x} (\mathbf {t})$ be a local affine parametrisation of $V$ centred at the smooth point
$P=[p_0:p_1:\dotsb:p_N]$,
with---for example---$p_0\neq 0$ and ${\mathbf{x}}(\mathbf 0)=P$.
Let $I=(i_1,\dotsc,i_k)$ be a multiindex, that is a
$k$-tuple of non negative integers. We shall denote by $\vert I\vert$ the sum of the components of $I$, i.e.  $\vert I\vert=i_1+\dotsb+i_k$.
%
If ${\mathbf{x}}(t_1,\dots,t_k)=(x_1(t_1,\dots,t_k),\dots,x_N(t_1,\dots,t_k))$ is the above vector function,
we shall denote by ${\mathbf{x}}^I$ the partial derivatives of $\mathbf x$: 
$$
{\mathbf {x}}^I=\frac{\partial^{\vert I\vert}{\mathbf{x}}(t_1,\dots,t_k)}
{\partial t_1^{i_1}\dots\partial t_k^{i_k}}.
$$

\begin{de}\label{de:le}
By saying that $V$ \emph{satisfies $\delta_s$ Laplace equations of order $s$} we mean that---with the above local parametrisation $\mathbf{x}$
of $V$---$\mathbf{x}$ satisfies the following system of partial differential equations
\begin{align}\label{laplaceequations}
\sum_{0\leq\vert I\vert \leq s} E_I^{(h)}{\mathbf{x}}^{I} &=0, &E_I^{(h)}&\in\mathbb C,\  h =1,\dotsc, \delta_s
\end{align}
such that at least a $E_I^{(h)}\neq 0$ with $|I|=s$ and that these equations are linearly independent.
We say equivalently that $V$ represents the system of differential equations \eqref{laplaceequations} or  that $V$ is an integral variety for it.
\end{de}

It is not restrictive to suppose that $P=[1:0:\dotsb:0]$, ant therefore we have ${\mathbf{x}}(\mathbf 0)=P= \mathbf 0\in \mathbb A^N$, and that
\begin{align}
x_i(t_1,\dots,t_k)&=t_i & \forall i &\le k,
\end{align}
i.e. $x_{k+1}=\dotsb=x_N$ defines $\widetilde {T}_P(V)\subset \bP^N$. In these hypotheses, Equations \eqref{laplaceequations} become
\begin{align}\label{laplacecorto}
\sum_{2\leq\vert I\vert \leq s} E_I^{(h)}{\mathbf{x}}^{I} &=0 & h &=1,\dotsc, \delta_s.
\end{align}
In what follows, we will make this assumption.

At same time, to study the behaviour of $V$ in $P$, following \cite{G-H} (and references [2], [6], [7] and [10] therein) and \cite{L},
we consider the frame manifold on $V$, $\mathcal F (V)$; an element of it is a Darboux frame centred in $P$, that means an $(N+1)$-tuple
$$\left\{A_0;A_1,\ldots,A_k;\dotsc,A_N\right\}$$
which is a basis of $\mathbb C^{N+1}$ such that, if $\pi:\mathbb C^{N+1}\setminus\{0\}\rightarrow \bP^N$,
$$\pi(A_0)=P$$
and
\begin{equation*}
\pi(A_0), \pi(A_1),\ldots,\pi(A_k)  \textup{ span } \widetilde{T}_P(V).
\end{equation*}
Let us make this frame to move in  $\mathcal F (V)$;
then we can give structure equations (with Maurer-Cartan $1$-forms $\omega_i$, $\omega_{i,j}$ on $\mathcal F(\bP^N)$ restricted on $V$)
for the exterior derivatives of this moving frame

\begin{equation}\label{derivate_frame_eq}
\begin{cases}
 \omega_\mu=0 &\forall \mu>k\\
  d A_0=\sum_{i=0}^k \omega_i A_i \\
  d A_i=\sum_{j=0}^N \omega_{ij}A_j  &i=1,\dotsc,N\\
  d\omega_j= \sum_{h=0}^k \omega_h \wedge \omega_{h,j} & j=0,\dotsc,k\\
    d\omega_{ij}= \sum_{h=0}^N \omega_{i,h} \wedge \omega_{h,j}  &i=1,\dotsc,N,\ j=0,\dotsc,N. 
\end{cases}\end{equation}
\begin{re} Geometrically, the frame $\{A_i\}$ defines a coordinate simplex in $\bP^N$.
The $1$-forms $\omega_i, \omega_{ij}$ give the rotation matrix when the coordinate simplex is infinitesimally displaced; in particular, modulo $A_0$,
as $d A_0\in T^*_P(\bP^N)$ (the cotangent space), the $1$-forms $\omega_1,\dotsc,\omega_k$ give a basis for the cotangent space $T^*_P(V)$,
the corresponding $\pi(A_i)=v_i\in T_P(V)$ give a basis for $T_P(V)$ such that $v_i$ is tangent to the line $\overline{A_0A_i}$,
 and $\omega_{k+1} =\cdots =\omega_N=0$ on $T_P(V)$.
%
\end{re}
In such notation, we can define locally the second fundamental form:
\begin{de}\label{II}
The \emph{second fundamental form of $V$ in $P$} is the linear system $|II|$ in the projective space
$\bP(T_P(V))\cong \bP^{k-1}$ of the quadrics defined by the equations:
\begin{align*}
\sum_{i,j=1}^k q_{ij\mu} \omega_i\omega_j&=0, & \mu&=k+1,\ldots,N
\end{align*}
where the coefficients $q_{ij\mu}$ are defined by the relations
\begin{align}\label{eq:second}
\omega_{i\mu}&=\sum_{j=1}^k q_{ij\mu}\omega_j, & q_{ij\mu}&=q_{ji\mu}
\end{align}
obtained from $d\omega_\mu=0$, $\forall \mu>k$, via the Cartan lemma (see \cite[(1.17)]{G-H}).
\end{de}

We may symbolically write the second fundamental form as in \cite[(1.20)]{G-H} as
\begin{align}\label{eq_II}
d^2 A_0 \equiv \sum_{\substack{
0\le i,j\le k\\
k+1\le \mu \le N}
}q_{ij\mu}\omega_i \omega_j A_\mu& \mod \tilde T(V)
\end{align}
or, more intrinsically the following (global) map
\begin{align}\label{sym2}
  II\colon \sym^{(2)} T(V) &\rightarrow N(V)
\end{align}
where $N(V)$ is the normal bundle  ($N_P(V):=\dfrac{\mathbb C^{N+1}}{\tilde T_P(V)}$ as in \cite{G-H})
which in coordinates is
\begin{equation*}
   II(\sum_{i,j} a_{ij}v_i v_j)=\sum_{\substack{
0\le i,j\le k\\
k+1\le \mu \le N}
}
    q_{ij}a_{ij}A_\mu.
\end{equation*}

To relate the second fundamental form to the Laplace equations \eqref{laplaceequations}, for ease our exposition,
we consider the case $s=2$ . 
If there are $\delta_2$ independent relations of type:
\begin{align*}
\sum_{i, j=1}^k a_{i j}^{(\alpha)} {\mathbf x}^{(i j)} + \sum_{i=1}^k b_i ^{(\alpha)} {\mathbf x}^{(i)} + c^{(\alpha)}{\mathbf x} &= 0, & \alpha&=1,\dotsc,\delta_2,
\end{align*}
that, with our assumption on the coordinate become
\begin{align}\label{2nd}
\sum_{i, j=1}^k a_{i j}^{(\alpha)} {\mathbf x}^{(i j)}  &= 0, & \alpha&=1,\dotsc,\delta_2;
\end{align}
we can consider the linear system of quadrics of $\bP(T_P(V)^*)$ of dimension $\delta_2 -1$, generated by the quadrics of equations:
\begin{align}\label{eq_quadriche}
\sum_{i, j=1}^k a_{i j}^{(\alpha)} v_i  v_j &= 0 &\alpha &= 1, \dotsc,\delta_2;
\end{align}
it defines the linear system of quadrics \emph{associated} to the  system of Laplace equations.

We recall now some notions of apolarity. Since our definitions are base dependent, for ease our exposition,
we can say that two forms $f\in\mathbb C[x_0,\dotsc,x_N]$
and $g\in \mathbb C[y_0,\dotsc,y_N]= C[x_0,\dotsc,x_N]^*$ of the same degree $n$, are \emph{apolar} if
\begin{equation*}
\sum_{I=(i_0,\dotsc,i_N)} a_I b_I=0,
\end{equation*}
where $f=\sum_I a_I\mathbf x^I$ and $g=\sum_I b_I\mathbf y^I$.
Since $f$ and $g$ define hypersurfaces $F:=V(f)\subset \bP^N=\Proj(\mathbb C[x_0,\dotsc,x_N])$ and
$G:=V(g)\subset \bP^{N*}=\Proj(\mathbb C[y_0,\dotsc,y_N])$, we will say also that $F$ and $G$ are \emph{apolar} if $f$ and $g$ are apolar.

Given a system of hypersurface $H$ in $\bP^N$ we say that the linear system $K$ in $\bP^{N*}$
given by the hypersurfaces which are apolar to all the ones in $H$ is the \emph{apolar system} of $H$.

The following result is classical:

\begin{pr}\label{prop_IIapolar}
$\vert II\vert$ is the apolar system to  the system of quadrics \eqref{eq_quadriche}; so, if
$V$ satisfies $ \delta_2$ independent Laplace equations, then $ \dim |II| =\binom{k+1}{2}-1-\delta_2$.
\end{pr}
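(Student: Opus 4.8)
The plan is to translate both the second fundamental form and the order-$2$ Laplace equations into a single pairing on quadrics on $T_P(V)$, and then to recognise that pairing as apolarity. First I would fix the Darboux frame compatibly with the parametrisation $\bx$, taking $A_0(\mathbf t)$ to be the affine lift $(1,x_1(\mathbf t),\dotsc,x_N(\mathbf t))$ of $\bx(\mathbf t)$ and $A_i=\partial A_0/\partial t_i|_P$ for $i\le k$. With the normalisation $x_i=t_i$ $(i\le k)$ and $x_0\equiv 1$, one has $\omega_i=dt_i$ on $T_P(V)$, while each derivative $\bx^{(ij)}$ has vanishing $0$-th and first $k$ components at $P$; comparing the Taylor expansion $d^2A_0=\sum_{i,j}\bx^{(ij)}\,dt_i\,dt_j$ with \eqref{eq_II} then identifies $q_{ij\mu}$ with the $\mu$-th (normal) component of $\bx^{(ij)}$, for $\mu=k+1,\dotsc,N$. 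This dictionary between the moving-frame coefficients and the second derivatives of a parametrisation is the step I expect to require the most care, since it is precisely where the two languages (Darboux frames versus Laplace equations) have to be matched.

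Granting this, I would read off that, after the coordinate normalisation, an order-$2$ Laplace equation $\sum_{i,j}a_{ij}\bx^{(ij)}=0$ as in \eqref{2nd} is equivalent to the system $\sum_{i,j}a_{ij}q_{ij\mu}=0$ for all $\mu>k$, because the only possibly non-zero components of $\sum_{i,j}a_{ij}\bx^{(ij)}$ are the normal ones. In other words, a quadric $\sum_{i,j}a_{ij}v_iv_j$ of \eqref{eq_quadriche} yields a Laplace equation exactly when it lies in the kernel of the map $II\colon\sym^{(2)}T_P(V)\to N_P(V)$ of \eqref{sym2}, which sends $\sum_{i,j}a_{ij}v_iv_j$ to $\sum_\mu\bigl(\sum_{i,j}q_{ij\mu}a_{ij}\bigr)A_\mu$. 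Hence the space of order-$2$ Laplace equations is exactly $\ker II$.

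Next I would invoke the classical apolarity pairing between quadrics on $\bP(T_P(V)^*)$ and quadrics on $\bP(T_P(V))$ which, when the quadrics are written in the symmetric form of \eqref{eq_quadriche} and Definition \ref{II}, reduces to the nondegenerate trace pairing $\langle f,g\rangle=\sum_{i,j}a_{ij}q_{ij}$ for $f=\sum_{i,j}a_{ij}v_iv_j$ and $g=\sum_{i,j}q_{ij}\omega_i\omega_j$. Under it, the computation above says exactly that $\sum_{i,j}a_{ij}v_iv_j$ is apolar to every quadric $\sum_{i,j}q_{ij\mu}\omega_i\omega_j$ of $|II|$ if and only if it is a Laplace quadric; equivalently $\ker II$ is the annihilator of the span of the quadrics $\sum_{i,j}q_{ij\mu}\omega_i\omega_j$. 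Taking perpendiculars and using nondegeneracy gives that this span equals $(\ker II)^{\perp}$, i.e. $|II|$ is precisely the apolar system to the linear system \eqref{eq_quadriche}, which proves the first assertion.

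Finally, the dimension count is pure linear algebra. If $V$ satisfies $\delta_2$ independent Laplace equations then $\dim\ker II=\delta_2$, so $\rk II=\binom{k+1}{2}-\delta_2$, since $\sym^{(2)}T_P(V)$ has dimension $\binom{k+1}{2}$. As $\rk II$ equals the dimension of the span of the quadrics $\sum_{i,j}q_{ij\mu}\omega_i\omega_j$ (again by nondegeneracy of the trace pairing), it equals $\dim|II|+1$; therefore $\dim|II|=\binom{k+1}{2}-1-\delta_2$, as claimed.
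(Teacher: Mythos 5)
Your argument is essentially the paper's own proof: the paper likewise identifies $q_{ij\mu}$ with the normal components of $\mathbf{x}^{(ij)}$ by evaluating $d^2A_0$ on $\sum_{i,j}a_{ij}^{(\alpha)}v_iv_j$ in two ways (via \eqref{eq_II} and via the second partials), so that a Laplace equation is exactly the vanishing of the trace pairing $\sum_{i,j}a_{ij}q_{ij\mu}$ for all $\mu$, and the dimension count is the same linear algebra. You merely spell out more explicitly the nondegeneracy of the pairing and the passage from $\ker II$ to the annihilator, which the paper leaves implicit.
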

\begin{proof}
Since we can identify the parametrisation $\bx$ around $P$ with $\pi(A_0)$, then, by   \eqref{eq_II}
\begin{align*}
d^2 A_0 (\sum_{i, j=1}^k a_{i j}^{(\alpha)} v_i  v_j) &= \sum_{1\le i,j\le k}q_{ij\mu}  a_{i j}^{(\alpha)} & \alpha &= 1, \dotsc,\delta_2 & \mu &=k+1,\dotsc,N;
\end{align*}
for our choice of the coordinates. On the other hand,
\begin{align*}
d^2 A_0 (\sum_{i, j=1}^k a_{i j}^{(\alpha)} v_i  v_j)  &=\sum_{i, j=1}^k a_{i j}^{(\alpha)} \frac{d^2A_0}{d v_i  d v_j}  =\sum_{i, j=1}^k  a_{i j}^{(\alpha)} {\mathbf x}^{(i j)}
& \alpha &= 1, \dotsc,\delta_2.
\end{align*}


\end{proof}

The second fundamental form can be related also with the second osculating space that we define as follows

\begin{de}\label{T2}
Let $P\in V$, the \emph{second osculating space} to $V$ at $P$ is the subspace $\tilde T^{(2)}_P(V)\subset \bP^N$ spanned by $A_0$ and by all the
derivatives $\dfrac{d A_0}{d v_\alpha}=A_{\alpha}$ and $\dfrac{d A_\alpha}{d v_\beta}= \dfrac{d A_\beta}{d v_\alpha} $ for $1\leq\alpha,\beta\leq k$.
\end{de}
So from now on we can consider the Darboux frame
\begin{equation*}
\{A_0;A_1,\dotsc,A_k;A_{k+1},\dotsc,A_{k+r};A_{k+r+1},\dotsc,A_{N}\}
\end{equation*}
 so that $A_0;A_1,\dotsc,A_k;A_{k+1},\dotsc,A_{k+r}$ in $P$ span $\tilde T^{(2)}_P(V)$.
It is straightforward to see---for example from the proof of Proposition \ref{prop_IIapolar}---that
\begin{equation*}
\dim  |II|=r-1 \Longleftrightarrow \dim  \tilde T^{(2)}_P(V)=k+r.
\end{equation*}

Generalising Definition \ref{II}, we can define the $t$-th fundamental form and the
$t$-th osculating space at $P\in V$, for $t\geq 3$,  and relate them with \eqref{laplaceequations}.

\begin{de}\label{t^t def} Let $P\in V$, let $t\geq 3$ be an integer and $I=(i_1,\ldots,i_k)$ such that $|I|\leq t$.
The  \emph{$t-$th osculating space} to $V$ at $P$ is the subspace $\tilde T^{(t)}_P(V)\subset \bP^N$ spanned by $A_0$ and by all the
derivatives $\dfrac{d^{|I|}A_0}{d v_1^{i1}\dotsm d v_k^{i_k}}$, where $v_1,\dotsc, v_k$ span $T_P(V)$.

We will put
\begin{align*}
d_t&:= \dim(\tilde T^{(t)}_P(V)),\\
e_t&:= \expdim (\tilde T^{(t)}_P(V))=\min(N, d_{t-1}+\binom{k-1+t}{t}).
\end{align*}
\end{de}

\begin{re}
If  $V$ satisfies $\delta_t$ Laplace equations of order $t$, we have  $d_t = e_t - \delta_t$.
Moreover, since a Laplace equation of order $t$ contains at least one of the $\binom{k-1+t}{t}$ partial derivatives of order $t$, we have
$ \delta_t \le \binom{k-1+t}{t}$.

Put $k_t:=\binom{k+t}{t}-1$.  Obviously, $d_t\le \min(k_t,N)$.
If $N<k_t$, then $V\subseteq\bP^N$ represents at least $k_t-N$ Laplace
equations of order $t$. These Laplace equations are called \emph{trivial}.

\end{re}
\begin{de}
Let $t\geq 2$ and $V_0\subseteq V$ be the quasi projective variety of points where $\tilde T ^{(t)}_P (V)$ has maximal dimension.
The variety
\begin{equation*}
{\Tan}^t(V) := { \overline{\bigcup_{P \in V_0} \tilde T^{(t)}_P(V)}}
\end{equation*}
  is called  the \emph{variety of  osculating $t$-spaces to $V$}.
Its expected dimension is
\begin{equation*}
\expdim {\Tan}^t(V):=\min(k+ d_t,N)
\end{equation*}
The \emph{$t$-th osculating defect of $V$} is the integer
\begin{equation*}
o _t:=\expdim{\Tan}^t(V) -\dim {\Tan}^t(V).
\end{equation*}
 If $t=1$, we call $o_1$ the \emph{tangent defect}.
\end{de}

\begin{re}
Obviously we have
\begin{equation*}
d_t\le d_{t-1}+ \binom{k-1+t}{t}\le \dotsm \le \sum_{i=1}^t\binom{k+i-1}{i}= k_t.
\end{equation*}
\end{re}
We will study the osculating defects related to the fundamental forms. Following \cite{G-H} and recalling \eqref{eq_II} we give
\begin{de}\label{ff_def}
The \emph{$t-$th fundamental form} of $ V$ in $P$
 is the linear system $|I^t|$ in the projective space $\bP (T_P(V))\cong \bP^{k-1}$ of hypersurfaces of degree $t$ defined symbolically by the equations:
\begin{equation*}
d^t A_0=0;
\end{equation*}
more intrinsically, we write $I^t$ as the map
\begin{equation*}
   I^t\colon  \sym^{(t)} T(V) \rightarrow N^t(V)
\end{equation*}
where $N^t(V)$ is the bundle defined locally as $N^t_P(V):=\displaystyle{\frac{\mathbb C^{N+1}}{{\tilde T}^{(t-1)}_P(V)} }$
and the map $I^t$ is defined locally on each $v\in T_P(V)$ as
\begin{equation*}
     v^t \mapsto \displaystyle{\frac{d^t A_0}{d v^t} }\mod {\tilde T}^{(t-1)}_P(V).
\end{equation*}
\end{de}
Choose a Darboux frame
\begin{equation}\label{eq:darbu}
\{A_0;A_1,\dotsc,A_k;A_{k+1},\dotsc,A_{d_2};A_{d_2+1},\dotsc,A_{d_s};\dotsc,A_{d_t};\dotsc, A_N\}
\end{equation}
such that $A_0,A_1,\dotsc,A_{d_s}$ span ${\tilde T}^{(s)}_P(V)$ $\forall s=1,\dotsc, t$, with $d_1:=k$.
By the definition of  ${\tilde T}^{(s)}_P(V)$, we have that
\begin{align}\label{eq:dsup}
d A_{\alpha_{s-1}}& \equiv 0 \mod  \tilde T^{(s)}_P(V) & \alpha_{s-1}&=d_{s-2}+1,\dotsc, d_{s-1}, &  s&=2,\dotsc, t-1,
\end{align}
where we put $d_0=0$, from  \eqref{derivate_frame_eq} we have
\begin{align}\label{eq:omega}
\omega_{\alpha_{s-1},\mu_s}&=0 &  \alpha_{s-1}&=d_{s-2}+1,\dotsc, d_{s-1},\ \mu_s>d_s & s&=2,\dotsc t-1,
\end{align}
from which we infer, after some computations
\begin{equation}\label{eq:tfond}
d^ t A_0 \equiv
\sum_{\substack{d_{s-1}+1\le \alpha_s \le d_{s}\\
s=1,\dotsc, t-1\\
d_{t-1}+1\le \alpha_{t}\le N}}
\omega_{\alpha_1}\omega_{\alpha_1,\alpha_2}\dotsm \omega_{\alpha_s,\alpha_{s+1}}\dotsm \omega_{\alpha_{t-1},\alpha_{t}}A_{\alpha_{t}} \mod {\tilde T}^{(t-1)}_P(V)
\end{equation}
or, using Cartan lemma,
\begin{equation}\label{eq:tfond1}
\begin{array}{ll}d^ t A_0 & \equiv 
\displaystyle{\sum_{\substack{
1\le i_1,\dotsc,i_t\le k\\
d_{t-1}+1\le \alpha_t\le N
}} q_{i_1,\dotsc,i_t,\alpha_t}\omega_{i_1}\dotsm\omega_{i_t} A_{\alpha_t}=}\\ &
=\displaystyle{\sum_{\substack{
|I|=t\\
d_{t-1}+1\le \alpha_t\le N
}} q_{I,\alpha_t}\omega_I A_{\alpha_t}
\ \mod {\tilde T}^{(t-1)}_P(V)},
\end{array}
\end{equation}
with the natural symmetries for the indices $i_1,\dotsc,i_t$ of  $q_{i_1,\dotsc,i_t,\alpha_t}$ which can be expressed as
\begin{align}\label{eq:eulero}
\frac{d^{t-1}A_i}{d v_j}&\equiv \frac{d^{t-1}A_j}{d v_i} &\mod &{\tilde T}^{(t-1)}_P(V), & i,j=1,\dotsc,k.
\end{align}

From \eqref{eq:omega}
\begin{align*}
0&=d\omega_{\alpha_{s-1},\mu_s} =\sum_{h_s=d_{s-1}+1}^{d_s}\omega_{\alpha_{s-1},h_s}\wedge\omega_{h_s,\mu_s}    \\  \alpha_{s-1}&=d_{s-2}+1,\dotsc, d_{s-1},\ \ \ \ \mu_s>d_s , \ \ \ \  s=2,\dotsc t-1.
\end{align*}
Now, $\omega_{\alpha_{s-1},h_s}$ and $\omega_{h_s,\mu_s}$ are horizontal for the fibration $\tilde T^{(t-1)}_P(V)\to V$, and therefore---by induction on $s$, since
the case $s=2$ is in \cite[page 374]{G-H}---they are a linear combination of $\omega_1,\dotsc,\omega_k$; then, we have
\begin{multline*}
0=d\omega_{\alpha_{s-1},\mu_s}(\frac{\partial}{\partial \omega_\gamma})=\sum_{h_s=d_{s-1}+1}^{d_s}(\frac{\partial \omega_{\alpha_{s-1},h_s}}{\partial \omega_\gamma}
\omega_{h_s,\mu_s}-
\omega_{\alpha_{s-1},h_s}\frac{\partial \omega_{h_s,\mu_s}}{\partial \omega_\gamma})   \\
\alpha_{s-1}=d_{s-2}+1,\dotsc, d_{s-1},\ \mu_s>d_s\\
\gamma=1,\dotsc,k,\ s=2,\dotsc t-1.
\end{multline*}
which means
\begin{multline}\label{eq:tanti}
\sum_{h_s=d_{s-1}+1}^{d_s}(\frac{\partial \omega_{\alpha_{s-1},h_s}}{\partial \omega_\gamma} \omega_{h_s,\mu_s})
=\sum_{h_s=d_{s-1}+1}^{d_s}(\frac{\partial \omega_{h_s,\mu_s}}{\partial \omega_\gamma}\omega_{\alpha_{s-1},h_s})   \\
\alpha_{s-1}=d_{s-2}+1,\dotsc, d_{s-1},\ \mu_s>d_s\\
\gamma=1,\dotsc,k,\ s=2,\dotsc t-1.
\end{multline}
Since the linear system $|I^t|$ is generated, from Relation \eqref{eq:tfond}, by the following polynomials of degree $t$
\begin{align*}
V_{\alpha_t}&:=\sum_{\substack{d_{s-1}+1\le \alpha_s \le d_{s}\\
s=1,\dotsc, t-1}}
\omega_{\alpha_1}\omega_{\alpha_1,\alpha_2}\dotsm \omega_{\alpha_s,\alpha_{s+1}}\dotsm \omega_{\alpha_{t-1},\alpha_t}, & d_{t-1}+1&\le \alpha_{t}\le N
\end{align*}
then, we can prove Theorem \ref{thm:I}; in order to do so, we recall that

\begin{de}
Let $\Sigma$  be the linear system of dimension $d$ of hypersurfaces of degree $n$ ($n>1$) in  $\bP^N$ ($N>1$), generated
by the $d+1$ hypersurfaces $f_0=0, \dotsc, f_d=0$.

The Jacobian matrix of the forms $f_0, \dotsc, f_d$,
\begin{equation*}
J(\Sigma):=(\partial f_i/\partial x_j)_{i=0, \dotsc,d; j=0,\dotsc,r}
\end{equation*}
is said the \emph{Jacobian matrix} of the system $\Sigma$.

The \emph{Jacobian system} of $\Sigma$  is the linear system of the minors of maximum order of $J(\Sigma)$. Obviously, the Jacobian system does not
depend on the choice of $f_0, \dotsc, f_d$, but only on $\Sigma$.
\end{de}

\begin{thm}\label{thm:I}
Given a $k$-dimensional projective variety $V\subset\bP^N$, its $t$-th fundamental  form  $|I^t|$ is a linear system of polynomials of degree $t$
whose Jacobian system is contained in the  $(t-1)$-th fundamental  form  $|I^{t-1}|$.
\end{thm}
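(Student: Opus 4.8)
The plan is to compute the partial derivatives of the generators $V_{\alpha_t}$ of the linear system $|I^t|$ and to recognise the resulting expressions as elements of $|I^{t-1}|$. Recall that, by \eqref{eq:tfond}, the system $|I^t|$ is generated by the polynomials
\begin{equation*}
V_{\alpha_t}=\sum_{\substack{d_{s-1}+1\le \alpha_s \le d_{s}\\ s=1,\dotsc, t-1}}\omega_{\alpha_1}\omega_{\alpha_1,\alpha_2}\dotsm \omega_{\alpha_{t-1},\alpha_t},\qquad d_{t-1}+1\le \alpha_t\le N,
\end{equation*}
viewed as degree-$t$ forms in the variables $\omega_1,\dotsc,\omega_k$ (all the $\omega_{\alpha_s,\alpha_{s+1}}$ being, by the horizontality noted before \eqref{eq:tanti}, linear combinations of $\omega_1,\dotsc,\omega_k$). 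The Jacobian system of $|I^t|$ is generated by the maximal minors of the Jacobian matrix $J(|I^t|)=(\partial V_{\alpha_t}/\partial \omega_\gamma)$, so it suffices to show that every partial derivative $\partial V_{\alpha_t}/\partial \omega_\gamma$ already lies in $|I^{t-1}|$, for then each entry of the Jacobian matrix—hence each minor, being a polynomial combination of the entries that is itself of degree $t-1$—will belong to $|I^{t-1}|$.

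First I would differentiate $V_{\alpha_t}$ with respect to $\omega_\gamma$ using the product rule, which produces a sum over which of the factors $\omega_{\alpha_1},\omega_{\alpha_1,\alpha_2},\dotsc,\omega_{\alpha_{t-1},\alpha_t}$ is being differentiated. The leading term, in which the initial factor $\omega_{\alpha_1}$ is differentiated, contributes a form of degree $t-1$ that—after carrying out the sum over $\alpha_1$—is visibly a generator of $|I^{t-1}|$, namely the expression obtained from \eqref{eq:tfond} with $t$ replaced by $t-1$ and terminal index $\alpha_{t-1}$ in the appropriate range. The remaining terms involve derivatives $\partial \omega_{\alpha_{s-1},h_s}/\partial \omega_\gamma$ of the intermediate $1$-forms. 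This is where \eqref{eq:tanti} enters: the symmetry relations obtained there from $d\omega_{\alpha_{s-1},\mu_s}=0$ let me reorganise these terms, collapsing the differentiated factor back into the same shape so that, after summation over the interior indices, each surviving contribution is again a degree $t-1$ form of the type defining $|I^{t-1}|$. The symmetries \eqref{eq:eulero} are what guarantee the coefficients assemble consistently.

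The main obstacle I expect is the bookkeeping in the middle terms: when $\omega_\gamma$ differentiates an interior factor $\omega_{\alpha_{s-1},\alpha_s}$ rather than the first factor, the naive derivative does not obviously look like a generator of $|I^{t-1}|$, and one must invoke \eqref{eq:tanti} to trade $\partial\omega_{\alpha_{s-1},h_s}/\partial\omega_\gamma$ against $\partial\omega_{h_s,\mu_s}/\partial\omega_\gamma$ and re-index so that the broken chain of $1$-forms is restored to length $t-1$. Managing the telescoping of these index ranges $d_{s-1}+1\le\alpha_s\le d_s$ across all $s$, and checking that the boundary contributions match, is the genuinely delicate step; the rest is a routine application of the product rule. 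Once every $\partial V_{\alpha_t}/\partial\omega_\gamma$ is exhibited inside $|I^{t-1}|$, the containment of the Jacobian system follows immediately, since that system is spanned by minors which are themselves degree $t-1$ forms built from these entries and hence lie in the linear span $|I^{t-1}|$.
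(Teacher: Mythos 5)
Your plan is essentially the paper's own proof: differentiate the generators $V_{\alpha_t}$ by the product rule and use the symmetry relations \eqref{eq:tanti} (together with \eqref{eq:second}) to show that every entry $\partial V_{\alpha_t}/\partial\omega_\gamma$ of the Jacobian matrix lies in $|I^{t-1}|$; the paper in fact shows that all $t$ terms produced by the product rule coincide, so that
$\partial V_{\alpha_t}/\partial\omega_\gamma = t\sum\omega_{\alpha_1}\dotsm \omega_{\alpha_{t-2},\alpha_{t-1}}\,\partial\omega_{\alpha_{t-1},\alpha_t}/\partial\omega_\gamma$,
a constant-coefficient combination of the generators of $|I^{t-1}|$. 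Two small corrections: the ``leading'' term $\sum\omega_{\gamma,\alpha_2}\dotsm\omega_{\alpha_{t-1},\alpha_t}$ is \emph{not} visibly a generator of $|I^{t-1}|$ --- it needs \eqref{eq:second} and \eqref{eq:tanti} just as the interior terms do; and your closing claim that a maximal minor of the Jacobian matrix is ``itself of degree $t-1$'' is false (an $m\times m$ minor of a matrix of degree-$(t-1)$ forms has degree $m(t-1)$), though the paper's proof likewise stops at the entries and never addresses the minors, so this does not separate your argument from theirs.
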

\begin{proof}
With notation as above, we start considering, with $d_{t-1}+1\le \alpha_{t}\le N$,
\begin{multline*}
\frac{\partial V_{\alpha_t}}{\partial \omega_\gamma}=\sum_{\substack{d_{s-1}+1\le \alpha_s \le d_{s}\\
s=2,\dotsc, t-1}}
\omega_{\gamma,\alpha_2}\dotsm \omega_{\alpha_s,\alpha_{s+1}}\dotsm
\omega_{\alpha_{t-1},\alpha_t}+\dotsb\\
\dotsb +\sum_{\substack{d_{s-1}+1\le \alpha_s \le d_{s}\\
s=1,\dotsc, t-1}}
(\omega_{\alpha_1}\dotsm \frac{\partial \omega_{\alpha_s,\alpha_{s+1}}}{\partial \omega_\gamma}\dotsm
\omega_{\alpha_{t-1},\alpha_t}+\dotsb\\
\dotsb +\omega_{\alpha_1}\dotsm \omega_{\alpha_s,\alpha_{s+1}}\dotsm
\frac{\partial\omega_{\alpha_{t-1},\alpha_t}}{\partial \omega_\gamma});
\end{multline*}
then from \eqref{eq:tanti}, we deduce
\begin{multline*}
\frac{\partial V_{\alpha_t}}{\partial \omega_\gamma}=\sum_{\substack{d_{s-1}+1\le \alpha_s \le d_{s}\\
s=2,\dotsc, t-1}}
\omega_{\gamma,\alpha_2}\dotsm \omega_{\alpha_s,\alpha_{s+1}}\dotsm
\omega_{\alpha_{t-1},\alpha_t}+\\
+(t-1)\sum_{\substack{d_{s-1}+1\le \alpha_s \le d_{s}\\
s=1,\dotsc, t-1}}
\omega_{\alpha_1}\dotsm \omega_{\alpha_s,\alpha_{s+1}}\dotsm
\frac{\partial\omega_{\alpha_{t-1},\alpha_t}}{\partial \omega_\gamma};
\end{multline*}
then, for example from \eqref{eq:second}
\begin{equation*}
\omega_{\gamma,\alpha_2}=\sum_{\alpha_1=1}^k q_{\gamma,\alpha_1,\alpha_2}\omega_{\alpha_1}= \sum_{\alpha_1=1}^k q_{\alpha_1,\gamma,\alpha_2}\omega_{\alpha_1}=
\sum_{\alpha_1=1}^k\frac{\partial \omega_{\alpha_1,\alpha_2}}{\partial \omega_\gamma}\omega_{\alpha_1}
\end{equation*}
and again from \eqref{eq:tanti},
\begin{equation*}
\frac{\partial V_{\alpha_t}}{\partial \omega_\gamma}=t\sum_{\substack{d_{s-1}+1\le \alpha_s \le d_{s}\\
s=1,\dotsc, t-1}}
\omega_{\alpha_1}\dotsm \omega_{\alpha_s,\alpha_{s+1}}\dotsm
\frac{\partial\omega_{\alpha_{t-1},\alpha_t}}{\partial \omega_\gamma}.
\end{equation*}
\end{proof}

Actually, as for the second fundamental form, Proposition \ref{prop_apolarity} holds, with an adapted proof from the one as in Proposition \ref{prop_IIapolar}.
In order to do so, we suppose to fix a Darboux frame as \eqref{eq:darbu}; then, with this choice, if we have a system of $\delta_t$ Laplace
equations of order $t$ as in \eqref{laplaceequations}, they can be expressed as
\begin{align}\label{laplaceequationsgen}
\sum_{|I|= t} E_I^{(h)}{\mathbf{x}}^{I} &=0 & h &=1,\dotsc, \delta_t;
\end{align}
then, as in \eqref{eq_quadriche} we can define the linear systems of homogeneous polynomials of degree $t$ \emph{associated} to
\eqref{laplaceequationsgen}:
\begin{align}\label{asso}
\sum_{|I|= t} E_I^{(h)}{\mathbf{v}}_{I} &=0 & h &=1,\dotsc, \delta_t,
\end{align}
where $\mathbf v_I=\prod_{\substack{i=1,\dotsc,k\\ i_1+\dotsm+ i_k=t}}
v_i^{i_j}.$

\begin{pr}\label{prop_apolarity}
If $V$ satisfies $\delta_t$ Laplace equations of order $t$ like in \eqref{laplaceequations},
the $t$-th fundamental form is the apolar system to  the system of the hypersurfaces of degree $t$ associated to the system of Laplace equations (i. e.
the hypersurfaces in \eqref{asso}), and vice versa.
\end{pr}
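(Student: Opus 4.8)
The plan is to adapt the proof of Proposition \ref{prop_IIapolar} to arbitrary order $t$, so that the role played there by the quadrics $\sum q_{ij\mu}\omega_i\omega_j$ and $\sum a_{ij}^{(\alpha)}v_iv_j$ is now played by the degree-$t$ forms coming from \eqref{eq:tfond1} and from \eqref{asso}. First I would record that, by \eqref{eq:tfond1}, the $t$-th fundamental form $|I^t|$ is generated by the forms $Q_{\alpha_t}:=\sum_{1\le i_1,\dotsc,i_t\le k}q_{i_1\dotsm i_t,\alpha_t}\,\omega_{i_1}\dotsm\omega_{i_t}$, for $d_{t-1}+1\le\alpha_t\le N$, in the variables $\omega_1,\dotsc,\omega_k$ on $\bP(T_P(V))$, the nonzero ones being indexed by $d_{t-1}+1\le\alpha_t\le d_t$. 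Working throughout with ordered multi-indices and symmetric coefficients, exactly the convention already used in Definition \ref{II} and Proposition \ref{prop_IIapolar}, is what keeps the multinomial factors out of the way.

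The key step is the identification, valid modulo $\tilde T^{(t-1)}_P(V)$, of the partial derivative $\mathbf{x}^{(i_1,\dotsc,i_t)}$ with $\sum_{\alpha_t}q_{i_1\dotsm i_t,\alpha_t}A_{\alpha_t}$. Indeed, identifying $\bx$ with $\pi(A_0)=A_0$ as in Proposition \ref{prop_IIapolar} and expanding $d^tA_0$ as the ordered sum $\sum \mathbf{x}^{(i_1,\dotsc,i_t)}\omega_{i_1}\dotsm\omega_{i_t}$, comparison with the first line of \eqref{eq:tfond1} shows that the component of $\mathbf{x}^{(i_1,\dotsc,i_t)}$ along $A_{\alpha_t}$ is precisely $q_{i_1\dotsm i_t,\alpha_t}$; this generalises the relation $q_{ij\mu}=\mathbf{x}^{(ij)}|_{A_\mu}$ implicit in the $t=2$ computation. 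Feeding a Laplace equation of the reduced form \eqref{laplaceequationsgen}, $\sum_{|I|=t}E_I^{(h)}\mathbf{x}^I=0$, into this identity and reading off the component along each normal vector $A_{\alpha_t}$ yields $\sum_{|I|=t}E_I^{(h)}q_{I,\alpha_t}=0$ for every $h$ and every $\alpha_t$. By the definition of apolarity this says exactly that each generator $Q_{\alpha_t}$ is apolar to each associated form $\sum_{|I|=t}E_I^{(h)}\mathbf{v}_I$ of \eqref{asso}; hence $|I^t|$ is contained in the apolar system of \eqref{asso}.

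To upgrade this inclusion to the asserted equality I would compare dimensions. The associated system \eqref{asso} is a $\delta_t$-dimensional linear subspace of the space of degree-$t$ forms in $k$ variables, which has dimension $\binom{k-1+t}{t}$; since apolarity is the orthogonal complement for a non-degenerate pairing on this space, its apolar system has projective dimension $\binom{k-1+t}{t}-1-\delta_t$. On the other hand $\dim|I^t|=(d_t-d_{t-1})-1$, and by the Remark following Definition \ref{t^t def} one has $d_t=e_t-\delta_t=d_{t-1}+\binom{k-1+t}{t}-\delta_t$ in the range where no trivial Laplace equations occur, so the two dimensions agree and the inclusion is an equality. The converse (``vice versa'') then follows formally from biduality of the apolarity pairing: applying the apolar-system operation twice returns the original system, so the Laplace forms \eqref{asso} are recovered as the apolar system of $|I^t|$, which is the reverse implication.

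The step I expect to be delicate is the normalisation bookkeeping in the key identity: one must sum consistently over ordered indices with symmetric coefficients (equivalently, track the multinomial weights $\binom{t}{I}$) so that the pairing $\sum_I E_I^{(h)}q_{I,\alpha_t}$ emerging from the Laplace equations coincides with the apolarity pairing as defined, rather than differing from it by diagonal factors. A secondary point to handle carefully is the dimension count when $N<d_{t-1}+\binom{k-1+t}{t}$, i.e. when trivial Laplace equations are present and $e_t=N$; there the count must be read off from $d_t=e_t-\delta_t$ directly, and one checks that the generators $Q_{\alpha_t}$ with $d_{t-1}+1\le\alpha_t\le d_t$ still span the full apolar system.
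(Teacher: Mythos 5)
Your proposal is correct and follows essentially the same route as the paper: both evaluate $d^tA_0$ on the associated forms $\sum_{|I|=t}E_I^{(h)}\mathbf v_I$ in two ways via \eqref{eq:tfond1}, once producing the apolarity pairing $\sum_I E_I^{(h)}q_{I,\alpha_t}$ and once producing $\sum_I E_I^{(h)}\mathbf x^I=0$. Your additional dimension count (to upgrade the containment to an equality) and your attention to the multinomial normalisation are points the paper's proof leaves implicit, but they do not constitute a different argument.
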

\begin{proof}
It is enough to repeat the proof of Proposition \ref{prop_IIapolar} with an adapted local coordinate system.
More precisely, we can choose a Darboux frame as in \eqref{eq:darbu}.
Since we can identify the parametrisation $\bx$ around $P$ with $\pi(A_0)$, then, by our hypothesis, Laplace equations of order $t$ become
\begin{align}
\sum_{|I| =t} E_I^{(h)}{\mathbf{x}}^{I} &=0 & h &=1,\dotsc, \delta_t.
\end{align}
By  \eqref{eq:tfond1}, we have
\begin{align*}
d^ t A_0 (\sum_{|I| =t} E_I^{(h)}{\mathbf{v}}_{I})&= \sum_{|I| =t}  q_{I,\beta} E_I^{(h)} &  h &=1,\dotsc, \delta_t & \beta&=d_{t-1}+1,\dotsc, N,
\end{align*}
and, on the other hand,
\begin{align*}
d^ t A_0 (\sum_{|I| =t} E_I^{(h)}{\mathbf{v}}_{I})&= \sum_{|I| =t}  E_I^{(h)}\frac{d^t A_0}{(d\mathbf{v})^I}=\sum_{|I| =t} E_I^{(h)}{\mathbf{x}}^{I} & h &=1,\dotsc, \delta_t.
\end{align*}
\end{proof}
From Proposition \ref{prop_apolarity} we recover immediately the following
\begin{co}\label{cor_dimensione}
If $V$ satisfies $\delta_t$ Laplace equations of order $t$ like in \eqref{laplaceequations}, 
the $t$-th fundamental form has dimension $\binom{k-1+t}{t}-1-\delta_t$,
and vice versa.
\end{co}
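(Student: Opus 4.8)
The plan is to deduce the statement directly from Proposition~\ref{prop_apolarity} together with an elementary dimension count for the apolarity pairing, since the corollary is really just its numerical shadow. First I would fix the ambient vector space: on $\bP(T_P(V))\cong\bP^{k-1}$ the homogeneous forms of degree $t$ in the coordinates $v_1,\dotsc,v_k$ constitute a vector space $W$ of dimension $\binom{k-1+t}{t}$, so the projective space of degree-$t$ hypersurfaces has dimension $\binom{k-1+t}{t}-1$. The associated system \eqref{asso} is built from the very coefficients $E_I^{(h)}$ appearing in the Laplace equations \eqref{laplaceequationsgen}; hence its $\delta_t$ generators are linearly independent \emph{as forms} precisely because the $\delta_t$ Laplace equations are linearly independent, the passage being the identity on the coefficient arrays $(E_I^{(h)})_{|I|=t}$. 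Thus \eqref{asso} spans a subspace $U$ with $\dim_{\mathbb C} U=\delta_t$.

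Next I would invoke the apolarity. By Proposition~\ref{prop_apolarity} the $t$-th fundamental form $|I^t|$ is the apolar system of $U$; by the Definition of apolarity the relevant pairing is $\langle f,g\rangle=\sum_I a_I b_I$ on coefficient vectors, i.e.\ the standard bilinear form, whose Gram matrix in the monomial basis is the identity and which is therefore nondegenerate. Consequently $|I^t|$ corresponds to the annihilator $U^{\perp}$ of $U$ in the dual degree-$t$ piece $W^{*}$, and nondegeneracy gives $\dim_{\mathbb C}U^{\perp}=\binom{k-1+t}{t}-\delta_t$. Projectivising yields $\dim|I^t|=\binom{k-1+t}{t}-1-\delta_t$, as claimed. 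The converse implication follows by reading the same chain of equalities backwards: if $\dim|I^t|=\binom{k-1+t}{t}-1-\delta_t$, then its apolar system has vector dimension $\delta_t$, and Proposition~\ref{prop_apolarity} translates a basis of this system into $\delta_t$ linearly independent Laplace equations of order $t$.

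The argument is essentially formal once Proposition~\ref{prop_apolarity} is available, so the only point requiring care is the bookkeeping at the two ends of the pairing. Concretely, I would verify that the correspondence between the differential equations \eqref{laplaceequationsgen} and the associated forms \eqref{asso} preserves both linear independence and the count $\delta_t$, which is immediate since it is linear and injective on the arrays $(E_I^{(h)})$. The main (and only mild) obstacle is thus to confirm that the apolarity pairing is genuinely perfect on $\sym^{(t)}T_P(V)$, with no normalising binomial factors suppressed; as the pairing is literally the coordinatewise product of coefficient vectors, this is automatic, and the complementary-dimension formula for $U^{\perp}$, hence the corollary, follows at once.
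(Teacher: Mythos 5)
Your argument is correct and is exactly the deduction the paper intends: the paper states Corollary~\ref{cor_dimensione} as an immediate consequence of Proposition~\ref{prop_apolarity}, and your dimension count via the perfect apolarity pairing (annihilator of a $\delta_t$-dimensional subspace of the $\binom{k-1+t}{t}$-dimensional space of degree-$t$ forms) is precisely the suppressed bookkeeping. Nothing further is needed.
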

We will denote the dimension of the $t$-th fundamental form as $\Delta_t$:
\begin{equation*}
\Delta_t:=\dim(|I^t|).
\end{equation*}

\begin{co}\label{cor_dimensione ff}
If $N\ge k_t$, we have that
\begin{equation*}
d_t=  d_{t-1}+\Delta_t+1,
\end{equation*}
and vice versa, if $d_t=  d_{t-1}+\Delta+1$, then the $t$-th fundamental form has dimension $\Delta$.
\end{co}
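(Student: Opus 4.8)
The plan is to read the identity off the two results established immediately above. The Remark following Definition~\ref{t^t def} records that $d_t = e_t - \delta_t$ whenever $V$ satisfies $\delta_t$ Laplace equations of order $t$, while Corollary~\ref{cor_dimensione} gives $\Delta_t = \binom{k-1+t}{t} - 1 - \delta_t$, equivalently $\delta_t = \binom{k-1+t}{t} - 1 - \Delta_t$. The only genuine step is to confirm that the hypothesis $N \ge k_t$ forces the expected dimension $e_t = \min\bigl(N,\, d_{t-1} + \binom{k-1+t}{t}\bigr)$ to equal its second argument; after that the conclusion is a pure substitution.

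First I would check the inequality $d_{t-1} + \binom{k-1+t}{t} \le k_t$, which is contained in the chain $d_t \le d_{t-1} + \binom{k-1+t}{t} \le \dots \le k_t$ recorded in a Remark above; alternatively it follows at once from the a priori bound $d_{t-1} \le k_{t-1} = \binom{k+t-1}{t-1} - 1$ together with Pascal's identity $\binom{k+t-1}{t-1} + \binom{k+t-1}{t} = \binom{k+t}{t}$, since $\binom{k-1+t}{t} = \binom{k+t-1}{t}$. Combined with $N \ge k_t$ this gives $N \ge d_{t-1} + \binom{k-1+t}{t}$, whence $e_t = d_{t-1} + \binom{k-1+t}{t}$.

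Next I would substitute: from $d_t = e_t - \delta_t$ one obtains $d_t = d_{t-1} + \binom{k-1+t}{t} - \delta_t$, and inserting $\delta_t = \binom{k-1+t}{t} - 1 - \Delta_t$ the two copies of $\binom{k-1+t}{t}$ cancel, leaving $d_t = d_{t-1} + \Delta_t + 1$, which is the forward direction. For the converse, if $d_t = d_{t-1} + \Delta + 1$ for some integer $\Delta$, then comparing with the relation $d_t = d_{t-1} + \Delta_t + 1$ just proved under the standing hypothesis $N \ge k_t$ forces $\Delta = \Delta_t$, so the $t$-th fundamental form has dimension $\Delta$.

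The argument is essentially formal once the expected dimension is pinned down, so the only potential obstacle is the binomial bookkeeping in the first step---verifying that the minimum defining $e_t$ is attained by $d_{t-1} + \binom{k-1+t}{t}$ rather than by $N$. I expect no serious difficulty there beyond the Pascal computation already indicated.
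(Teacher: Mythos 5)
Your proposal is correct and follows exactly the route the paper intends: the corollary is stated without proof as an immediate consequence of Corollary~\ref{cor_dimensione} ($\Delta_t=\binom{k-1+t}{t}-1-\delta_t$) together with the relation $d_t=e_t-\delta_t$ from the Remark after Definition~\ref{t^t def}, with the hypothesis $N\ge k_t$ serving precisely to guarantee $e_t=d_{t-1}+\binom{k-1+t}{t}$ via the chain $d_{t-1}+\binom{k-1+t}{t}\le k_t$. Your verification of that inequality and your reading of the ``vice versa'' as a formal consequence of the forward direction are both sound.
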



From now on, we will suppose that our Darboux frame is as in \eqref{eq:darbu}.

In order to prove the results of the following section, we recall also the following notation and definitions.

Let $\Sigma^h_t \subset \mathbb{G}(N,t)$ be a subvariety of pure dimension $h$. Let
 $I_{\Sigma^h_t}\subset \Sigma^h_t \times \bP^N$ be the incidence variety of the pairs $(\sigma, q)$ such that
$q\in \sigma$ and let $p_1\colon I_{\Sigma^h_t} \to \Sigma^h_t$ and $p_2\colon I_{\Sigma^h_t} \to \bP^N$ be the maps induced by restricting
to $I_{\Sigma^h_t}$ the canonical projections of $\Sigma^h_t \times \bP^N$ to its factors.

The morphism  $p_1\colon I_{\Sigma^h_t} \to \Sigma^h_t$ is said to be a \emph{family of $t$-dimensional linear subvarieties of $\bP^N$}.
$\Sigma^h_t$ is the parameter space of the family, but for brevity we will often refer to it as to the family itself.
Obviously,
\begin{equation*}
\dim(I_{\Sigma^h_t})= t + \dim(\Sigma^h_t).
\end{equation*}
Let us suppose that $\Sigma^h_t$ is irreducible. We will denote by $S(\Sigma^h_t)$ the image of $I_{\Sigma^h_t}$ under $p_2$.
$S(\Sigma^h_t)$ is---by definition---a \emph{scroll in $\bP^r$} of $\bP^N$.
The previous notation will be useful to study osculating variety.
\begin{de}
Let $t\geq 1$, the \emph{$t$-th projective Gauss map} is the rational map
\begin{align*}
    \gamma^t\colon &V \dashrightarrow \mathbb G(\bP^N,d_t)\\
    & P \mapsto {\tilde T}^{(t)}_P(V).
\end{align*}
\end{de}
\begin{re}\label{rem_dimension}
The \emph{$t$-th osculating variety} is $\widetilde{{\Tan}^t}(V) = {\overline{\bigcup_{P \in V_0}\gamma^t(P)}}\subset \mathbb G(k_t,\bP^N)$
where, as before, $\ V_0$ denotes the open subset of $\ V$ of the points for which $\ \dim {\tilde T}^{(t)}_P(V)=d_t$
and then ${\Tan}^t(V)$ is the scroll $S(\widetilde{{\Tan}^t}(V))$ of dimension
\begin{equation*}
\dim{\Tan}^t(V)\leq\dim \im \gamma^t+d_t =k+d_t-\dim((\gamma^t)^{-1}(\Pi)),
\end{equation*}
where $\Pi$ is a general element of  $\widetilde{{\Tan}^t}(V)$.
\end{re}

We prove now
\begin{thm}\label{13}
The first differential of $\gamma^t$ at $P$ is the $(t+1)$-th fundamental form at $P$.
\end{thm}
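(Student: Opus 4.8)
The plan is to identify the differential $d\gamma^t_P$, via the standard description of the tangent space to the Grassmannian, with the map induced by differentiating the moving frame, and then to recognise the result as the $(t+1)$-th fundamental form by means of the structure equations. Recall first that for a point $\Lambda\in\mathbb G(N,d_t)$ corresponding (in the affine-cone convention) to the $(d_t+1)$-dimensional subspace $\tilde T^{(t)}_P(V)\subset\mathbb C^{N+1}$, one has the canonical identification
\[
T_\Lambda\,\mathbb G(N,d_t)\;\cong\;\Hom\!\bigl(\tilde T^{(t)}_P(V),\,\mathbb C^{N+1}/\tilde T^{(t)}_P(V)\bigr)\;=\;\Hom\!\bigl(\tilde T^{(t)}_P(V),\,N^{t+1}_P(V)\bigr),
\]
the last equality being Definition \ref{ff_def}. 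Working at a general point, where $\dim\tilde T^{(t)}_P(V)=d_t$, and using the Darboux frame \eqref{eq:darbu} adapted to the osculating flag, the subspace $\tilde T^{(t)}_P(V)$ is spanned by $A_0,\dots,A_{d_t}$. As $P$ moves, the standard formula for the differential of a Gauss map yields
\[
d\gamma^t_P(v)\colon A_\alpha\longmapsto\sum_{\mu>d_t}\omega_{\alpha\mu}(v)\,A_\mu\ \bmod\tilde T^{(t)}_P(V),\qquad 0\le\alpha\le d_t,
\]
obtained by reducing the $dA_\alpha$ of \eqref{derivate_frame_eq} modulo $\tilde T^{(t)}_P(V)$ and contracting against $v\in T_P(V)$.

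Next I would use the osculating structure to discard all but the top-level contributions. By the very definition of the osculating spaces, together with \eqref{eq:dsup}--\eqref{eq:omega} (extended to the level $s=t$ simply by the definition of $\tilde T^{(t)}_P(V)$), every frame vector $A_\alpha$ with $\alpha\le d_{t-1}$ has $dA_\alpha\in\tilde T^{(t)}_P(V)$, so that $\omega_{\alpha\mu}=0$ for all $\mu>d_t$. Hence $d\gamma^t_P(v)$ annihilates $\tilde T^{(t-1)}_P(V)$ and factors through the top quotient, being determined entirely by the forms $\omega_{\alpha_t,\mu}$ with $d_{t-1}<\alpha_t\le d_t$ and $\mu>d_t$:
\[
d\gamma^t_P(v)\colon \tilde T^{(t)}_P(V)/\tilde T^{(t-1)}_P(V)\longrightarrow N^{t+1}_P(V),\qquad A_{\alpha_t}\longmapsto\sum_{\mu>d_t}\omega_{\alpha_t,\mu}(v)\,A_\mu.
\]
These are exactly the forms carrying the top factor of the expansion \eqref{eq:tfond} of $d^{t+1}A_0$.

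Finally I would match this with $I^{t+1}$ through the chain rule. The quotient $\tilde T^{(t)}_P(V)/\tilde T^{(t-1)}_P(V)$ is the image of the $t$-th fundamental form, the class of $A_{\alpha_t}$ being $I^t(w^t)=\tfrac{d^tA_0}{dw^t}\bmod\tilde T^{(t-1)}_P(V)$. Lifting this class and applying $d\gamma^t_P(v)$ gives, by the computation above,
\[
d\gamma^t_P(v)\bigl(I^t(w^t)\bigr)=\frac{d}{dv}\!\left(\frac{d^tA_0}{dw^t}\right)\bmod\tilde T^{(t)}_P(V)=\frac{d^{t+1}A_0}{dv\,dw^t}\bmod\tilde T^{(t)}_P(V),
\]
which is precisely the value of $I^{t+1}$ on the product $v\cdot w^t$. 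The symmetry relations \eqref{eq:eulero} then guarantee that the pairing $(v,w^t)\mapsto d\gamma^t_P(v)\bigl(I^t(w^t)\bigr)$ is fully symmetric in its $t+1$ tangent arguments, so it descends to a well-defined form on $\sym^{(t+1)}T_P(V)$ recovering $I^{t+1}$, as claimed.

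The main obstacle is the bookkeeping needed to pass between the two a priori different domains: $d\gamma^t_P$ is a homomorphism out of $\tilde T^{(t)}_P(V)$, whereas $I^{t+1}$ is a form on $\sym^{(t+1)}T_P(V)$. Reconciling them rests on two points: first, that the differential kills the lower osculating levels, which is the content of \eqref{eq:omega} and relies on the induction on $s$ already carried out before Theorem \ref{thm:I}; and second, that the resulting $(t+1)$-tensor is genuinely symmetric, which follows from \eqref{eq:eulero}. Once these are in place, the identification is forced by the expansion \eqref{eq:tfond} and its Cartan-lemma form \eqref{eq:tfond1}.
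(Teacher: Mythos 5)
Your proof is correct and follows essentially the same route as the paper: both identify $T_{\tilde T^{(t)}_P V}\mathbb G(\bP^N,d_t)$ with $\Hom(\tilde T^{(t)}_P V, N^{t+1}_P(V))$, compute the differential through the structure equations of the Darboux frame adapted to the osculating flag, and match the resulting forms $\omega_{\mu,j}$ against the expansion of $d^{t+1}A_0$. Your additional remarks on why the differential kills $\tilde T^{(t-1)}_P(V)$ and why the pairing descends to $\sym^{(t+1)}T_P(V)$ via \eqref{eq:eulero} only make explicit what the paper leaves implicit.
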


\begin{proof}
We have, by the definition of $\gamma^t$, that
\begin{equation*}
    d\gamma^t_P\colon T_P V \dashrightarrow T_{\tilde T^{(t)}_P V} \mathbb G(\bP^N,d_t),
\end{equation*}
and we recall that  $T_{\tilde T^{(t)}_P V} \mathbb G(\bP^N,d_t)\cong \Hom(\tilde T^{(t)}_P V, N^{t+1}_P(V))$; moreover
if we choose a Darboux frame as in \eqref{eq:darbu}, we have that $d A_0\in \tilde T_P V \subset \tilde T^{(t)}_P V$ and
\begin{equation*}
\frac{\tilde T^{(t)}_P V}{\mathbb C A_0}=T^{(t)}_P V
\end{equation*}
and therefore  $d\gamma^t_P\in \Hom (T_P V\otimes T^{(t)}_P V,  N^{t+1}_P(V)) $.

Now, we remark that, in our Darboux frame, we can interpret $\gamma^t$ as
\begin{equation*}
 \gamma^t(P)=A_0\wedge\dotsb \wedge A_{d_t},
\end{equation*}
and therefore by \eqref{derivate_frame_eq},
\begin{align*}
 d\gamma^t_P&\equiv\sum_{\substack{
1\le i \le d_t\\
d_t+1\le j \le N
}}(-1)^{d_t-i+1} \omega _{i,j} A_0\wedge\dotsb\wedge \hat {A_i}\wedge\dotsb \wedge A_{d_t}\wedge A_{j}, & \mod \tilde T^{(t)}_P V;
\end{align*}
now, a basis for $T_P V\otimes T^{(t)}_P V$ can be expressed by $(A_\alpha\otimes A_\mu)_{\substack{\alpha=1,\dotsc,k\\ \mu=1,\dotsc, d_t}}$,
and
\begin{align*}
 d\gamma^t_P(A_\alpha\otimes A_\mu) &=\sum_{
d_t+1\le j \le N} \omega _{\mu,j}(A_\alpha) A_{j}\in  N^{t+1}_P(V)
\end{align*}
on the other hand, for the $(t+1)$-th fundamental form we have
\begin{align*}
\frac{d A_\mu}{d v_\alpha} &\equiv \sum_{\substack{
d_t+1\le j \le N
}} \omega _{\mu,j}(A_\alpha) A_{j} &  \mod \tilde T^{(t)}_P(V).
\end{align*}

\end{proof}





We recall now the definition of higher order dual varieties (see \cite{pi}), which is the natural extension of the definition of the dual variety:
\begin{de}
Let $V\subset\bP^N$ be a projective variety; for \emph{$t$-th dual variety} of $V$, $\check V^{(t)}$,  we mean
\begin{equation}\label{tan}
\check V^{(t)}=\overline{\bigcup_{P\in V_0} C_P^{(t)}(V)}
\end{equation}
where---as before---$V_0\subset V$ is the set of the points for which $\dim T^{(t)}_P(V)=d_t$ and $C_p^{(t)}(V)$ is
\begin{equation*}
C_P^{(t)}(V):=\bigcap_{K\in T_P^{(t)}(V)} K= \{H\in{\bP^N}^* \mid H\supset \tilde T^{(t)}_P(V)\} \subset {\bP^N}^*.
\end{equation*}
and it is classically called the \emph{$t$-th characteristic space} of $V$ in $P$.
\end{de}
We can now make observation similar to the ones in \cite[\S 3(a)]{G-H}: the elements of $C_P^{(t)}(V)$ can be naturally identified with
the hyperplanes in $\bP(N_P^{t+1}(V))$ and therefore $\check V^{(t)}$ is just the image of the map
\begin{equation*}
\delta^{t}\colon \bP(N^{t+1}(V)^*)\to  {\bP^N}^*,
\end{equation*}
analogous to the one of \cite[(3.1)]{G-H}.
In term of the frames, a hyperplane $\xi$ of $\bP(N^{t+1}_P(V))$ can be given by choosing $A_{d_t+1}, \dotsc, A_{N-1}$ such that
their projection in  $N_P^{t+1}(V)=\mathbb C^{N+1}/\tilde T^{(t)}_P(V)$ spans $\xi$. Therefore, in term of coordinates, $\delta^t$ can be expressed as
\begin{equation*}
\delta^t(P,\xi)=A_0\wedge A_1\wedge \dotsb \wedge A_{N-1},
\end{equation*}
(see  \cite[(3.2)]{G-H}) or, if we choose dual coordinates
\begin{equation*}
A_i^*:=(-1)^{N-i}A_0\wedge \dotsb \wedge A_{i-1}\wedge A_{i+1}\wedge \dotsb \wedge A_N,
\end{equation*}
$\delta^t(P,\xi)=A_N^*$.
From relations \eqref{derivate_frame_eq} we deduce
\begin{equation*}
d A_j^*= \sum_{i\neq j} (-\omega_{i,j}A_i^*+ \omega_{i,i}A_j^*),
\end{equation*}
and in particular
\begin{equation*}
d A_N^*= \sum_{i=0}^{N-1} (-\omega_{i,N}A_i^*+ \omega_{i,i}A_N^*)=\sum_{i=1}^{N-1} (-\omega_{i,N}A_i^*)+(\omega_0 +\sum_{i=1}^{N-1} \omega_{i,i})A_N^*.
\end{equation*}
By the definition of  $\check V^{(t)}$, for its dimension we have
\begin{equation*}
N-d_t-1\le \dim\check V^{(t)}=:d_{t,1}\le N-d_t-1+k.
\end{equation*}

If we choose a Darboux frame as in \eqref{eq:darbu}, these formulas become, thank 
to \eqref{eq:omega}
\begin{align*}
d A_j^* &= \sum_{\substack{i\neq j\\ i>u-2 }}(-\omega_{i,j}A_i^*+ \omega_{i,i}A_j^*),
&\begin{cases}
j=d_{u-1}+1,\dotsc, d_{u}&\textup{if  $u=0,\dotsc, t-1$,}\\
j=d_{t-1}+1,\dotsc,N& \textup{if $u=t$},
\end{cases}
\end{align*}
where we put $d_{-1}:=-1$ when we vary $j$; in particular,
\begin{equation*}
d A_N^* = \sum_{i= t-1}^{N-1}(-\omega_{i,N}A_i^*+ \omega_{i,i}A_N^*))=\sum_{i=t-1}^{N-1} (-\omega_{i,N}A_i^*)+(\omega_0 +\sum_{i=t-1}^{N-1} \omega_{i,i})A_N^*,
\end{equation*}
and therefore
\begin{align}\label{eq:deg}
d A_N^* &\equiv \sum_{i=t-1}^{N-1} (-\omega_{i,N}A_i^*)&\mod  A_N^*.
\end{align}
\begin{de}
We say that $\check V^{(t)}$ is \emph{degenerate}, if it has dimension less than expected: $d_{t,1}< N-1-d_{t}+k$.
\end{de}

In Relation \eqref{eq:deg} the last $N-d_t-1$ forms $\omega_{i,N}$, $i=d_t+1,\dotsc,N-1$, restrict to a basis for the forms of the fibres
$\bP(N^{t+1}_P)^*=\bP^{N-1-d_t}$; in fact, they describe the variation of $\xi$ when $P$ is held fixed.
The first $\omega_{i,N}$, with $i\le d_t$ are horizontal for the fibreing  $\bP(N^{t+1}(V)^*)\to V$, and therefore
 $\check V^{(t)}$ is degenerate if and only if
\begin{align*}
\omega_{i_1,N}\wedge\dotsb \wedge \omega_{i_k,N} &=0&   \forall i_1, \dotsc i_k\ \textup{with}\  t-1\le i_1 < \dotsb < i_k \le d_t.
\end{align*}

Now, if we put $d_{t,s}:=\dim \tilde T^{(s)}_\xi(\check V^{(t)})$, if $N-d_{t,s}\ge d_{t-1}+1$, i.e. $d_{t,s}\le N-d_{t-1}-1$
(otherwise  $\tilde T^{(s)}_\xi(\check V^{(t)})=\bP^{N*}$),
we can choose a Darboux frame such that  $T^{(s)}_\xi(\check V^{(t)})$ is generated by $A_N^*$ and $A_{N-1}^*,\dotsc,A_{N-d_{t,s}}^*$.


Let us now define the characteristic varieties of a projective variety $V\subset\bP^N$.


\begin{de}
The \emph{variety of the $s$-th characteristic spaces of the $t$-th osculating spaces} of $V$ is the $s$-th dual of the $t$-th dual variety of $V$, that is
\begin{equation*}
\Car^s_t(V):=\overline{\bigcup_{\xi \in \check V^{(t)}_0} C_\xi^{(s)}(\check V^{(t)})}\subset\bP^N
\end{equation*}
where $\check V^{(t)}_0$ is the open subset of  $\check V^{(t)}$ of the points $\xi$ are such that $\xi\supset {\tilde T}^{(t)}_P(V)$ and
$\dim  {\tilde T}^{(t)}_P(V)=d_t$;
we will denote in the following this  $s$-th characteristic space of $\check V^{(t)}$ in a general $\xi\supset \tilde T_P^{(t)}(V)$ as
$C^{(s)}_{t,P}(V):= C_\xi^{(s)}(\check V^{(t)})$; then, using the above notation, $\dim (C^{(s)}_{t,P}(V))=N-1-d_{t,s}$.
\end{de}
\begin{lem}\label{lemma}
With notations as above, if $P\in V$, we have
\begin{enumerate}[1.]
\item\label{a} $\tilde T^{(t-1)}_P(V) \subset C^{(1)}_{t,P}(V)$;
\item\label{b} $P\in  C^{(s)}_{t,P}(V)$;
\item\label{c} if $\xi\in C^{(t)}_P(V)$, $\tilde T_\xi(\check V^{(t)}) \subset C^{(t-1)}_P(V)$.
\end{enumerate}
\end{lem}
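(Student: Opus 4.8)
The plan is to translate each of the three inclusions into a statement about which dual basis vectors $A_i^*$ occur in an osculating space of $\check V^{(t)}$ at $\xi$, and then read off the answer from the structure equations. The common starting point is the reformulation of the definition of $C^{(s)}_{t,P}(V)$: identifying $\bP^N$ with $\bP^{N**}$, a point $Q=\sum_j c_jA_j$ lies in $C^{(s)}_{t,P}(V)$ if and only if $Q$ lies on every hyperplane $\eta\in\tilde T^{(s)}_\xi(\check V^{(t)})$, so that $C^{(s)}_{t,P}(V)=\bigcap_{\eta\in\tilde T^{(s)}_\xi(\check V^{(t)})}\eta$. Since in dual coordinates the hyperplane $A_i^*$ is exactly the slice $\{c_i=0\}$, everything reduces to determining the indices $i$ for which $A_i^*$ appears in a spanning set of $\tilde T^{(s)}_\xi(\check V^{(t)})$.

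For parts \ref{a} and \ref{c} I would work in the frame \eqref{eq:darbu} and use the first-order formula \eqref{eq:deg}. The key input is that \eqref{eq:omega}, extended to the top level $s=t$ --- namely that differentiating a frame vector $A_\alpha$ with $d_{t-2}<\alpha\le d_{t-1}$ keeps $dA_\alpha$ inside $\tilde T^{(t)}_P(V)$, which is just the defining property of the osculating filtration --- forces $\omega_{\alpha,N}=0$ for every $\alpha\le d_{t-1}$ (together with $\omega_{0,N}=0$, read off from \eqref{derivate_frame_eq}). Substituting into \eqref{eq:deg} shows that $\tilde T_\xi(\check V^{(t)})\subseteq\langle A^*_{d_{t-1}+1},\dots,A_N^*\rangle$: the vertical forms $\omega_{i,N}$ with $d_t<i<N$ contribute the $A_i^*$ of the fibre, while the horizontal ones contribute only indices in level $t$, that is $d_{t-1}<i\le d_t$ (these are components of the $(t+1)$-th fundamental form, cf. Theorem \ref{13}). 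Intersecting the hyperplanes of this span gives $C^{(1)}_{t,P}(V)\supseteq\langle A_0,\dots,A_{d_{t-1}}\rangle=\tilde T^{(t-1)}_P(V)$, which is part \ref{a}. Part \ref{c} is the very same incidence read in $\bP^{N*}$: the assertions ``$\tilde T^{(t-1)}_P(V)\subseteq\bigcap_{\eta}\eta$'' and ``$\eta\supseteq\tilde T^{(t-1)}_P(V)$ for every $\eta\in\tilde T_\xi(\check V^{(t)})$'' are two readings of the same statement, and the hypothesis $\xi\in C^{(t)}_P(V)$ is exactly what makes $\xi\supseteq\tilde T^{(t)}_P(V)$ so that the tangent space just computed is available; hence part \ref{c} needs no further work.

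For part \ref{b} I would instead exploit the re-adapted frame set up just before the lemma. Assuming $d_{t,s}\le N-d_{t-1}-1$ (otherwise $\tilde T^{(s)}_\xi(\check V^{(t)})=\bP^{N*}$ and the characteristic space is empty), one may choose the Darboux frame so that $\tilde T^{(s)}_\xi(\check V^{(t)})=\langle A_N^*,A^*_{N-1},\dots,A^*_{N-d_{t,s}}\rangle$; this adaptation is compatible with \eqref{eq:darbu} precisely because $N-d_{t,s}\ge d_{t-1}+1$ keeps the top dual indices disjoint from the indices $0,\dots,d_{t-1}$ that carry $\tilde T^{(t-1)}_P(V)$. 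Intersecting the corresponding hyperplanes yields $C^{(s)}_{t,P}(V)=\{c_{N-d_{t,s}}=\dots=c_N=0\}=\langle A_0,\dots,A_{N-d_{t,s}-1}\rangle$, of dimension $N-1-d_{t,s}$ as announced; since $N-d_{t,s}-1\ge d_{t-1}\ge0$ this space contains $A_0=P$ (in fact it contains all of $\tilde T^{(t-1)}_P(V)$), which proves part \ref{b}.

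The one genuinely delicate point I expect is the vanishing $\omega_{i,N}=0$ for $i\le d_{t-1}$: one must justify carrying \eqref{eq:omega} up to the top level $s=t$, i.e. that differentiation does not push a level-$(t-1)$ frame vector out of $\tilde T^{(t)}_P(V)$. This is what confines the horizontal part of $\tilde T_\xi(\check V^{(t)})$ to the level-$t$ dual indices and thereby produces the inclusion of part \ref{a}; everything else is dualization bookkeeping (keeping straight that the hyperplane $A_i^*$ is the coordinate slice $\{c_i=0\}$) and, for part \ref{b}, the compatibility of the frame adaptation, which the inequality $N-d_{t,s}\ge d_{t-1}+1$ is designed to guarantee.
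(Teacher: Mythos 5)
Your proof is correct and follows essentially the same route as the paper's: adapt the Darboux frame, use the structure equation \eqref{eq:deg} together with the vanishing of $\omega_{i,N}$ for $i\le d_{t-1}$ to confine $\tilde T_\xi(\check V^{(t)})$ to the high-index dual vectors $A^*_{d_{t-1}+1},\dotsc,A^*_N$, intersect the corresponding hyperplanes for parts \ref{a} and \ref{b}, and obtain part \ref{c} by reading part \ref{a} in the dual space. If anything, your handling of part \ref{a} is slightly more careful than the paper's, which concludes via the dimension count $d_{t-1}\le d_t-k\le N-d_{t,1}-1$ rather than directly from the vanishing of the horizontal forms as you do.
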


\begin{proof}
\eqref{a}: using the above notations, we have that, since  $\tilde T_\xi(\check V^{(t)})$ is generated by $A_{N}^*$ and $A_{N-1}^*,\dotsc,A_{t-1}^*$,
and we can choose a frame such that we have that the first $d_{t,1}$, $A_{N-1}^*,\dotsc,A_{N-d_{t,1}}^*$ are a base of  $\tilde T_\xi(\check V^{(t)})$.
Then, $C_\xi^{(1)}(\check V^{(t)})$ contains $A_0$ and $A_1,\dotsc,A_{N-d_{t,1}-1}$, and since $d_{t-1}\le d_t-k\le  N-d_{t,1}-1$, we have the assertion.

\eqref{b}: since $\ \tilde T^{(s)}_\xi(\check V^{(t)})\ $ is generated, in an appropriate frame, by $\ A_N^*\ $ and $\ A_{N-1}^*,\dotsc,A_{{N-d_{t,s}}}^*$, we have that
$C_\xi^{(s)}(\check V^{(t)})$ contains $A_0$.

\eqref{c}: it is just \eqref{a} in the dual space.
\end{proof}

\begin{co}\label{cor:lem}
With notations as above, if $P\in V$, $\xi \in \check V^{(t)}$ and $Q\in C^{(s)}_{t,P}(V)$ are general points, then $\tilde T_\xi (\Car^s_t(V))\subset C^{(s-1)}_{t,P}(V)$.
\end{co}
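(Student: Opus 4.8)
The plan is to recognise the statement as Lemma~\ref{lemma}.\eqref{c} applied to the variety $\check V^{(t)}$ in place of $V$, after which the conclusion is a matter of translating notation. First I would set $W:=\check V^{(t)}\subset{\bP^N}^*$ and record the purely definitional identifications that carry the argument. By the definition of $\Car^s_t(V)$ together with the definition of the $s$-th dual variety, $\Car^s_t(V)$ is exactly the $s$-th dual $\check W^{(s)}$ of $W$ (sitting in $({\bP^N}^*)^*=\bP^N$); likewise, by the definition of $C^{(s)}_{t,P}(V)$ we have $C^{(s)}_{t,P}(V)=C^{(s)}_\xi(W)$ and $C^{(s-1)}_{t,P}(V)=C^{(s-1)}_\xi(W)$, where $\xi\in W$ is a general point with $\xi\supset\tilde T^{(t)}_P(V)$. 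Under the scroll description of $\check W^{(s)}$ over $W$, the symbol $\tilde T_\xi(\Car^s_t(V))$ is the tangent space at a general point $Q$ of the generic fibre $C^{(s)}_\xi(W)=C^{(s)}_{t,P}(V)$, i.e. $\tilde T_Q(\check W^{(s)})$, consistently with the scroll notation used elsewhere in the paper.

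Next I would note that Lemma~\ref{lemma} is not special to $V$: all three of its assertions are deduced purely from the structure equations~\eqref{derivate_frame_eq} and the dual frame relations for the $A_i^*$ established above, so the identical argument applies verbatim with $W$ in place of $V$. I then apply Lemma~\ref{lemma}.\eqref{c} to $W$, with $s$ in the role of $t$, the general point $\xi\in W$ in the role of $P$, and the general point $Q\in C^{(s)}_\xi(W)$ in the role of the point called ``$\xi$'' in that statement. This returns precisely $\tilde T_Q(\check W^{(s)})\subset C^{(s-1)}_\xi(W)$, which under the identifications above is the desired $\tilde T_\xi(\Car^s_t(V))\subset C^{(s-1)}_{t,P}(V)$.

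The step requiring genuine care---and the main obstacle---is checking that the genericity and frame-adaptedness hypotheses transfer to $W$. I must verify that for general $P\in V$ the chosen $\xi\in\check V^{(t)}$ and $Q\in C^{(s)}_{t,P}(V)$ are generic enough that an adapted Darboux frame of the type used before exists, namely one in which $\tilde T^{(s)}_\xi(\check V^{(t)})$ is spanned by $A_N^*,A_{N-1}^*,\dotsc,A^*_{N-d_{t,s}}$; this is exactly where the inequality $d_{t,s}\le N-d_{t-1}-1$, guaranteeing $\tilde T^{(s)}_\xi(\check V^{(t)})\neq{\bP^N}^*$, must be invoked so that Lemma~\ref{lemma}.\eqref{c} is genuinely applicable to $W$. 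Once this is secured the conclusion is immediate, since part~\eqref{c} is itself nothing but part~\eqref{a} read in the dual space.
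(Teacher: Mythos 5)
Your proposal is correct and follows essentially the same route as the paper, whose entire proof is the one-line observation that the corollary is the dual of Lemma~\ref{lemma},~\eqref{c}; you have simply made explicit the identifications ($\Car^s_t(V)$ as the $s$-th dual of $\check V^{(t)}$, and $C^{(s)}_{t,P}(V)=C^{(s)}_\xi(\check V^{(t)})$) and the genericity/frame-adaptedness check that the paper leaves implicit. Your reading of $\tilde T_\xi(\Car^s_t(V))$ as the tangent space at the general point $Q$ of the fibre is also the intended one.
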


\begin{proof}
It is simply the dual of Lemma \ref{lemma}, \eqref{c}.
\end{proof}
\section{Terracini's theorems and generalisations}\label{sec:terr}
%
%
%
%
%
%
In this section we generalise classical Terracini's results in terms of osculating defect and higher fundamental forms instead of Laplace equations,
so that we forget the parametrisation of $V$.
First of all, by Corollary \ref{cor_dimensione}  we rewrite the results of  \cite{T} and \cite[Section 3]{T1}
as follows:

\begin{thm}\label{1ca}  Let $V \subseteq \bP^N$  be a  $k$-dimensional irreducible variety whose second fundamental form has dimension
$k-\ell-1$, with $\ell >0$. Then $V$ has tangent defect at least $\ell$ and it is contained in a scroll $S(\Sigma^h_t)$ in $\bP^t$ 
such  that
$T_{\bP^t_v}(S(\Sigma^h_t)) \subset \bP^{2k-h-\ell}$ with $0 \leq h \leq k-\ell$, where $v\in \Sigma^h_t$ is a general point, and
$\bP^t_v$ is the corresponding fibre of the scroll.
\end{thm}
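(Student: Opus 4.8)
The plan is to establish the two conclusions separately, treating this as the instance $t=2$ of the method of Section~\ref{sec:terr}. First I would translate the hypothesis: by the equivalence recorded just after Definition~\ref{T2}, $\dim|II|=k-\ell-1$ forces $d_2=\dim\tilde T^{(2)}_P(V)=2k-\ell$ at a general $P$, equivalently the image of $II$ spans the $(k-\ell)$-dimensional quotient $\tilde T^{(2)}_P(V)/\tilde T_P(V)$. For the tangent defect I would compute $\tilde T_q\Tan(V)$ at a general point $q=A_0+v$ of a general tangent space, with $v=\sum_i c_iA_i\in T_P(V)$, in the frame \eqref{eq:darbu}. Letting the frame move, the variation of $q$ modulo $\tilde T_P(V)$ is controlled by $dA_i\equiv\sum_{\mu>k}\omega_{i\mu}A_\mu$, so by the Cartan relations \eqref{eq:second} it equals $\sum_{j,\mu}\bigl(\sum_i c_iq_{ij\mu}\bigr)\omega_jA_\mu$, i.e. the image of the map $w\mapsto II(v,w)$. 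Since this image lies in $\tilde T^{(2)}_P(V)/\tilde T_P(V)$ it has dimension $\le k-\ell$, whence $\dim\Tan(V)\le k+(k-\ell)=2k-\ell$ and the tangent defect is $\ge\ell$. (The same bound can be read off Remark~\ref{rem_dimension} together with Theorem~\ref{13} for $t=1$, which identifies $d\gamma^1$ with $II$.)

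For the scroll I would take $S:=\Car^2_2(V)$, the variety of second characteristic spaces of the second dual $\check V^{(2)}$, and show it has the required shape. By Lemma~\ref{lemma}\eqref{b} one has $P\in C^{(2)}_{2,P}(V)$, so $V\subseteq S$, and $S$ is the scroll swept by the fibres $C^{(2)}_{2,P}(V)$. Since $\check V^{(2)}$ fibres over $\im\gamma^2$ and the relevant characteristic spaces are constant along those fibres, the base of the scroll is $\im\gamma^2$, of dimension $h:=\dim\im\gamma^2$. The fibre dimension $r=\dim C^{(2)}_{2,P}(V)$ then satisfies $k-h\le r$ for the trivial reason that $k=\dim V\le\dim S\le h+r$.

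The two sharp bounds come from the characteristic-space inclusions. Because $\check V^{(2)}\to\im\gamma^2$ has linear fibres $\bP(N^3_P)^*\cong\bP^{N-1-d_2}$, its tangent space at a general $\xi$ has dimension $d_{2,1}=N-1-d_2+h$, so $\dim C^{(1)}_{2,P}(V)=N-1-d_{2,1}=d_2-h=2k-\ell-h$. Corollary~\ref{cor:lem} (with $s=2$, $t=2$) then gives $\tilde T_Q(S)\subseteq C^{(1)}_{2,P}(V)$ for $Q$ general on a fibre, hence $\tilde T_{\bP^r_v}(S)\subseteq\bP^{2k-\ell-h}$, which is the asserted bound. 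Finally, Lemma~\ref{lemma}\eqref{a} gives $\tilde T_P(V)=\tilde T^{(1)}_P(V)\subseteq C^{(1)}_{2,P}(V)$; comparing dimensions, $k\le d_2-h=2k-\ell-h$, i.e. $h\le k-\ell$. Together with $h\ge0$ this yields $0\le h\le k-\ell$ and completes the statement.

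The delicate point is the dimension count $d_{2,1}=N-1-d_2+h$ underlying the previous paragraph: one must verify that $\check V^{(2)}$ is genuinely a scroll over $\im\gamma^2$ with the expected fibre and base dimensions, that the characteristic spaces $C^{(s)}_{2,P}(V)$ are constant along the fibres of $\check V^{(2)}\to\im\gamma^2$ (so that the base has dimension $h$ rather than $\dim\check V^{(2)}$), and that no further degeneracy collapses these dimensions. This is precisely the analysis of the Maurer--Cartan forms $\omega_{i,N}$ in \eqref{eq:deg} and of the horizontality relations \eqref{eq:omega} that the lemmas of Section~\ref{notation_sec} were designed to control; handling the possible degeneracy of $\check V^{(2)}$, together with the genericity hypotheses implicit in ``general $P$'', is the main obstacle.
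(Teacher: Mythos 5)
Your proposal is correct and follows essentially the same route as the paper, which obtains Theorem~\ref{1ca} as the case $t=2$ of Theorem~\ref{mainthm}: the defect bound via $\tilde T_P(\Tan(V))\subseteq\tilde T^{(2)}_P(V)$ (your Terracini-style computation at a general point of $\Tan(V)$ is the content of Lemma~\ref{lemsbagliato}), and the scroll and tangency bounds via the Gauss map, Lemma~\ref{lemma} and Corollary~\ref{cor:lem}, exactly as you use them. The only cosmetic differences are that the paper's scroll has as fibres the linear spans of the fibres of $\gamma^2$ (a subscroll of your $\Car^2_2(V)$), and that the dimension count $\dim\check V^{(2)}=N-1-d_2+h$ which you rightly flag as delicate is assumed without further comment in the paper's proof.
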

\begin{thm}\label{2ca}
Let $V \subseteq \bP^N$  be a  $k$-dimensional irreducible variety.
  $V$ has tangent defect $o_1=\ell>0$ and the second fundamental form has dimension at least
$k-\ell$ if and only if the  Jacobian matrix of the second fundamental form of $V$ has rank $k-\ell$.
\end{thm}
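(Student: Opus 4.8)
The plan is to reduce the statement to an explicit formula for the tangent defect in terms of the rank of the Jacobian of $|II|$, obtained by computing the tangent space to $\Tan(V)$ at a general point with the moving frame (this is the $t=1$ instance of the tangent-space lemma for osculating varieties announced in the introduction). Fix a Darboux frame as in \eqref{eq:darbu} and a general point $x=\sum_{\alpha=0}^{k}\lambda_\alpha A_\alpha\in\tilde T_P(V)$, with $P\in V$ and $\lambda=(\lambda_1,\dots,\lambda_k)$ general. Differentiating $x$ along the fibre recovers all of $\tilde T_P(V)$, while differentiating along $V$ gives, using \eqref{derivate_frame_eq} and $dA_0\in\tilde T_P(V)$,
\begin{equation*}
dx\equiv\sum_{\mu>k}\bigl(\textstyle\sum_{\alpha=1}^{k}\lambda_\alpha\,\omega_{\alpha\mu}\bigr)A_\mu\quad\mod\tilde T_P(V).
\end{equation*}
By \eqref{eq:second} this equals $\sum_{\mu>k}\bigl(\sum_{j=1}^{k}M^\lambda_{\mu j}\,\omega_j\bigr)A_\mu$ with $M^\lambda_{\mu j}:=\sum_{\alpha=1}^{k}\lambda_\alpha q_{\alpha j\mu}$. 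Hence, modulo $\tilde T_P(V)$, the tangent space to $\Tan(V)$ at $x$ is the column space of $M^\lambda=(M^\lambda_{\mu j})$, so that $\dim\Tan(V)=k+\rk M^\lambda$.

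Next I would identify $M^\lambda$ with the Jacobian of $|II|$. Writing the generators of $|II|$ as $f_\mu=\sum_{i,j}q_{ij\mu}v_iv_j$, direct differentiation and the symmetry $q_{j\alpha\mu}=q_{\alpha j\mu}$ give $\tfrac12\,\partial f_\mu/\partial v_j=\sum_\alpha q_{\alpha j\mu}v_\alpha$, so $M^\lambda$ is precisely one half of the Jacobian matrix $J(|II|)$ evaluated at the point $\lambda\in\bP(T_P(V))$. Since $\lambda$ and $P$ are general, $\rk M^\lambda$ equals the generic rank of $J(|II|)$, i.e. the rank of the Jacobian matrix of the second fundamental form. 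As $\Tan(V)$ has expected dimension $2k$, this yields the key identity
\begin{equation*}
o_1=2k-\dim\Tan(V)=k-\rk J(|II|),
\end{equation*}
which is the entire substance of the theorem.

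From this identity both implications follow once we are in the regime fixed by the dimension hypothesis, which I treat as the standing assumption of the statement (exactly as in Terracini's original result). For the direct implication, if $o_1=\ell$ then $\rk J(|II|)=k-\ell$, and $\dim|II|\ge k-\ell$ guarantees that $J(|II|)$ has more than $k-\ell$ rows, so this value is a genuine rank drop rather than the maximal row rank. Conversely, if $\rk J(|II|)=k-\ell$ with $\ell>0$, the identity gives $o_1=\ell$; here the hypothesis $\dim|II|\ge k-\ell$ places us strictly in the range complementary to Theorem \ref{1ca}, where $\dim|II|=k-\ell-1$ only forces $o_1\ge\ell$ (the boundary case, realised e.g.\ by cones, is precisely the overlap one must exclude). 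Finally, via Corollary \ref{cor_dimensione} the condition $\dim|II|\ge k-\ell$ translates into $V$ satisfying fewer than $\binom{k}{2}+\ell$ Laplace equations of order two, so the statement is the faithful reformulation of Terracini's theorem in terms of fundamental forms.

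I expect the main obstacle to be the tangent-space computation itself. One must justify carefully that, at a general pair $(P,\lambda)$, the motion of the frame contributes to the tangent space of $\Tan(V)$ exactly the normal vectors encoded by $M^\lambda$ modulo $\tilde T_P(V)$, with no further collapsing; this needs the genericity of $\lambda$ and $P$ (so that $\rk M^\lambda$ attains the generic rank of $J(|II|)$) together with the symmetries \eqref{eq:eulero} to pass cleanly between the forms $\omega_{\alpha\mu}$ and the partial derivatives of the $f_\mu$. The identification $d\gamma^1_P=II$ of Theorem \ref{13} provides the conceptual reason this works, and the remaining manipulations are routine.
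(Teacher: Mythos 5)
Your argument is correct in substance but follows a genuinely different route from the paper's. The paper does not prove Theorem \ref{2ca} directly: it obtains it as the $t=1$ case of Theorem \ref{thm terr gen}, whose proof also starts from a general point $B_0=A_0+\sum_\alpha\lambda_\alpha A_\alpha$ of $\tilde T_P(V)$ and encodes the defect as $\ell$ independent linear relations among the derivatives $\partial B_0/\partial v_j$, $\partial B_0/\partial\lambda_\alpha$; but it then reads these relations as Laplace equations of order two whose associated quadrics are the reducible forms $(\sum_j a_{\alpha,j}v_j)(\sum_i\lambda_i v_i)$ with a common linear factor, and invokes apolarity (Proposition \ref{prop_apolarity}) to conclude that, after a linear change of coordinates on $T_P(V)$, $\ell$ of the columns $\partial f_\mu/\partial v_j$ of $J(|II|)$ vanish identically. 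That is a stronger structural statement than a rank count, and it is what lets the argument run uniformly in $t$. Your version short-circuits this by identifying the matrix $M^\lambda$ governing $T_x\Tan(V)$ with $\tfrac12 J(|II|)$ evaluated at $\lambda$ and extracting the identity $o_1=k-\rk J(|II|)$; this is more elementary and makes the equivalence transparent, but it is valid as written only when $N\ge 2k$ (otherwise $\expdim\Tan(V)=N$ and the identity needs adjusting), a case distinction you should flag. Your appeal to Theorem \ref{13} is purely motivational, as it is in the paper.

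The one genuine weak point is the converse. From $\rk J(|II|)=k-\ell$ your identity yields $o_1=\ell$, but the second conjunct of the left-hand side, $\dim|II|\ge k-\ell$, is there a conclusion to be proved, not a hypothesis you may ``treat as the standing assumption''. A row count only gives $\dim|II|\ge k-\ell-1$, and the boundary case does occur: a smooth hypersurface $V\subset\bP^{k+1}\subset\bP^N$ with $N\ge 2k$, $k\ge 2$, has $\dim|II|=0$, $o_1=k-1=\ell$ and $\rk J(|II|)=1=k-\ell$, so the right-hand side holds while the left-hand side fails. Your paragraph invoking Theorem \ref{1ca} does not close this. To be fair, the paper dismisses the converse with ``all the above can be reverted'' and is no more explicit on this point; but you should either derive the missing inequality under an additional hypothesis or record it explicitly as a caveat on the statement rather than absorb it into the assumptions.
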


We will prove Theorems \ref{mainthm} and \ref{thm terr gen}; Theorems \ref{1ca} and \ref{2ca} are just corollaries of them.

\begin{lem}\label{lemsbagliato}
Let $V \subseteq \bP^N$  be a  $k$-dimensional  variety and let $P\in V$. Then, the tangent cone to $\Tan^{t-1} (V)$ in $P$ is 
contained in   ${\tilde T}^{(t)}_P(V)$, and therefore 
$\tilde T_P(\Tan^{t-1} (V)) \subset  {\tilde T}^{(t)}_P(V)$.
\end{lem}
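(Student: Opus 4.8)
The plan is to first compute the embedded tangent space to $\Tan^{t-1}(V)$ at a smooth point lying on a general fibre, show it is contained in the $t$-th osculating space of the corresponding base point, and then recover the tangent cone at $P\in V$ as a limit of such tangent spaces. I would fix an adapted Darboux frame as in \eqref{eq:darbu}, so that $A_0,\dots,A_{d_s}$ span $\tilde T^{(s)}_{P'}(V)$ for every $s\le t$, and parametrise $\Tan^{t-1}(V)$ by its incidence variety: a general point is $Q=\sum_{i=0}^{d_{t-1}}c_i A_i(P')$, with $P'\in V_0$ the base point and $(c_0:\dotsb:c_{d_{t-1}})$ coordinates on the fibre $\tilde T^{(t-1)}_{P'}(V)$. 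The vertical tangent directions are the frame vectors $A_0,\dots,A_{d_{t-1}}$, which already lie in $\tilde T^{(t-1)}_{P'}(V)\subseteq\tilde T^{(t)}_{P'}(V)$. For the horizontal directions I would differentiate along $V$, obtaining from \eqref{derivate_frame_eq} that $\partial Q/\partial t_\alpha=\sum_i c_i\,dA_i(\partial_\alpha)$; the key point is that for every $i\le d_{t-1}$ the vector $dA_i$ lies in $\tilde T^{(t)}_{P'}(V)$. This is precisely the content of the definition of $\tilde T^{(t)}_{P'}(V)$ together with \eqref{eq:omega}, namely $\omega_{i,\mu}=0$ for $i\le d_{t-1}$ and $\mu>d_t$: differentiating a generator of $\tilde T^{(t-1)}_{P'}$ raises the osculating order by at most one. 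Hence every tangent direction at the smooth point $Q$ lies in $\tilde T^{(t)}_{P'}(V)$, that is, $\tilde T_Q(\Tan^{t-1}(V))\subseteq\tilde T^{(t)}_{P'}(V)$.

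Next I would pass to the point $P\in V\subseteq\Tan^{t-1}(V)$. Since $\Tan^{t-1}(V)$ is singular along $V$, I work with the tangent cone: every tangent cone direction at $P$ is a limit of tangent directions at smooth points $Q\to P$, and for $Q$ in a neighbourhood of $P$ these lie on fibres whose base point $P'$ tends to $P$. By the previous step each such direction lies in $\tilde T^{(t)}_{P'}(V)$, and by continuity of the $t$-th Gauss map $\gamma^t$ on $V_0$ (see Theorem \ref{13}) we have $\tilde T^{(t)}_{P'}(V)\to\tilde T^{(t)}_P(V)$; therefore the tangent cone of $\Tan^{t-1}(V)$ at $P$ is contained in $\tilde T^{(t)}_P(V)$. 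Because $\tilde T^{(t)}_P(V)$ is a linear space and $\tilde T_P(\Tan^{t-1}(V))$ is the linear span of the tangent cone, the asserted containment $\tilde T_P(\Tan^{t-1}(V))\subseteq\tilde T^{(t)}_P(V)$ follows at once.

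The routine part is the frame computation of $\tilde T_Q(\Tan^{t-1}(V))$ at smooth points, which is immediate from \eqref{derivate_frame_eq} and \eqref{eq:omega}. The main obstacle is the passage to the singular point: one must justify that the tangent cone at $P$ is captured by limits of tangent spaces along fibres whose base points tend to $P$, i.e.\ that no local analytic branch of $\Tan^{t-1}(V)$ through $P$ arises from base points bounded away from $P$. For the component dominating $V$ this is clear, but in general it requires identifying the branches of $\Tan^{t-1}(V)$ at $P$ with images of the incidence variety near the fibre over $P$. Alternatively one may avoid the limit by a direct leading-term computation: writing a curve $Q(u)$ on $\Tan^{t-1}(V)$ with $Q(0)=P$ and projecting $Q(u)-P$ modulo $\tilde T^{(t)}_P(V)$, the relations $dA_i\in\tilde T^{(t)}_P(V)$ for $i\le d_{t-1}$ together with $c_i(0)=0$ for $i\ge 1$ force this projection to vanish to higher order than the component of $Q(u)-P$ inside $\tilde T^{(t)}_P(V)$; the delicate point there, which must be checked, is that the latter component does not itself cancel to an order exceeding that of the exterior part, so that the leading term—hence the tangent cone direction—indeed lies in $\tilde T^{(t)}_P(V)$.
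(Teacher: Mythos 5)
Your proof is essentially the paper's, differing only in packaging. The entire content in both cases is the relation $dA_i\equiv 0 \bmod \tilde T^{(t)}_{P'}(V)$ for $i\le d_{t-1}$ (that is, \eqref{eq:dsup}, equivalently \eqref{eq:omega}), applied to a point $\sum_i c_iA_i$ of an osculating fibre. The paper carries out the differentiation in one stroke at $P$: it writes a moving point of $\Tan^{t-1}(V)$ as $B_0=C_0A_0+\sum_{i=1}^{k_{t-1}}C_iA_i$ over the moving base point of the adapted frame, observes that every term of $dB_0=dC_0A_0+C_0\,dA_0+\sum_i(dC_iA_i+C_i\,dA_i)$ lies in $\tilde T^{(t)}_P(V)$, and then invokes the fact that the tangent cone spans the tangent space; you instead first bound $\tilde T_Q(\Tan^{t-1}(V))$ at smooth points $Q$ and recover the tangent cone at $P$ by a limit. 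The obstacle you single out --- local branches of $\Tan^{t-1}(V)$ through $P$ swept out by osculating spaces at base points bounded away from $P$, for which $\tilde T^{(t)}_{P'}(V)$ need not converge to $\tilde T^{(t)}_P(V)$ --- is real, but it is equally untreated in the paper: writing $B_0$ in the frame centred at $P$ already presupposes that the branch is parametrized by base points tending to $P$. Likewise, your closing identification of $\tilde T_P(\Tan^{t-1}(V))$ with the linear span of the tangent cone is exactly the paper's final remark (and is the most fragile point of the statement at a singular $P$, although only the tangent-cone containment, hence only a bound on $\dim\Tan^{t-1}(V)$, is actually used in Theorem \ref{mainthm}). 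So your argument is no less complete than the printed one, and rather more explicit about where the subtleties sit.
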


\begin{proof}
Let us take a frame on $V$ as above, i.e. such that $\{A_0;A_1,\dotsc,A_{k_t};\dotsc, A_N\}$ the first $k$-elements $A_1,\dotsc, A_k$ generate $T_P(V)$, and so on,
and therefore $A_1,\dotsc, A_{k_t}$ generate $T_P^{(t)}(V)$.

Let us take also a frame on $\Tan^{t-1} (V)$ centred at $P$,   $\{B_0;B_1,\dotsc,B_{\ell};\dotsc, B_N\}$, such that $B_0$ represents  $P\in\Tan^{t-1} (V)$ and
$B_1,\dotsc,B_{\ell}$ generate $T_P(\Tan^{t-1} (V))$.
By definition, we have
\begin{equation*}
B_0=C_0A_0+\sum_{i=1}^{k_{t-1}}C_i A_i;
\end{equation*}
taking the exterior derivative
\begin{equation*}
dB_0=d C_0 A_0+C_0 d A_0+ \sum_{i=1}^{k_{t-1}}(d C_i A_i+C_i d A_i),
\end{equation*}
from which we infer that the tangent cone to $\Tan^{t-1} (V)$ in $P$ is 
contained in   ${\tilde T}^{(t)}_P(V)$; since the tangent cone spans the tangent space, we conclude that 
$\tilde T_P(\Tan^{t-1} (V)) \subset  {\tilde T}^{(t)}_P(V)$. 
\end{proof}

\begin{thm} \label{mainthm} Let $V \subseteq \bP^N$  be a  $k$-dimensional irreducible variety whose $t$-th fundamental form has dimension $k-\ell-1$,
with $\ell >0$. Then:
\begin{enumerate}[1.]
  \item\label{primo} $V$ has $(t-1)$-osculating defect $o_{t-1}\geq \ell$.
  \item\label{secondo} $V$ is contained in a $d$-dimensional scroll $S(\Sigma^h_r)$, $(d\le h+r)$, in linear spaces of dimension $r$,
with $0 \leq h \leq k-\ell$ and $k-h\le r$.
  \item\label{terzo} Let $\ \bP^r\subset \ S(\Sigma^h_r)\ $ be a general $\ r\ $- dimensional space of the scroll;
then $\ \langle \cup_{A\in \bP^r}\tilde T_A( S(\Sigma^h_r))\rangle$ is contained in a linear space of dimension $d_t-h=d_{t-1}+ k-\ell-h
(\le\binom{k+t-1}{t-1}-1+k-\ell-h)$.
In particular, $r\le d\le d_{t-1}+ k-\ell-h$.
\end{enumerate}

\end{thm}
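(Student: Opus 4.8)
The plan is to extract everything from the single numerical identity $d_t-d_{t-1}=\Delta_t+1=k-\ell$, valid because the $t$-th fundamental form measures exactly the jump $\tilde T^{(t)}_P(V)/\tilde T^{(t-1)}_P(V)$ (the content of Corollary~\ref{cor_dimensione ff}). For \ref{primo} I would feed this into Lemma~\ref{lemsbagliato}: that lemma gives $\tilde T_P(\Tan^{t-1}(V))\subseteq\tilde T^{(t)}_P(V)$, hence $\dim\Tan^{t-1}(V)\le d_t=d_{t-1}+k-\ell$, and subtracting from $\expdim\Tan^{t-1}(V)=k+d_{t-1}$ yields $o_{t-1}\ge\ell$ at once.

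For the ruling I would pass to the Gauss map $\gamma^{t-1}$. By Theorem~\ref{13} its differential is the $t$-th fundamental form, so the rank of $\gamma^{t-1}$ is $h:=k-\dim\ker I^t$, where $\ker I^t$ is the common vertex of the linear system $|I^t|$, i.e. the right kernel of the Jacobian matrix $J(|I^t|)$. Since $|I^t|$ is generated by $k-\ell$ forms, $J(|I^t|)$ has at most $k-\ell$ rows, so $\rk J(|I^t|)\le k-\ell$ and $\dim\ker I^t\ge\ell$; thus $h\le k-\ell$ and the fibres of $\gamma^{t-1}$ have dimension $k-h\ge\ell$. This fixes the integer $h$ of the statement, and it re-proves \ref{primo} in the sharper form $o_{t-1}\ge k-h$, since $\dim\Tan^{t-1}(V)\le\dim\im\gamma^{t-1}+d_{t-1}=h+d_{t-1}$ by Remark~\ref{rem_dimension}.

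Now each fibre $F$ of $\gamma^{t-1}$ lies in a single linear space: along $F$ the osculating space $\tilde T^{(t-1)}$ is by definition constant, say $\Pi\cong\bP^{d_{t-1}}$, and every point of $F$ lies in its own osculating space, so $F\subseteq\Pi$. Letting $P$ range over the $h$-dimensional image $\im\gamma^{t-1}$ therefore sweeps $V$ into a scroll $S(\Sigma^h_r)$ whose parameter space has dimension $h$ and whose $\bP^r$'s contain the $(k-h)$-dimensional fibres $F$; this forces $0\le h\le k-\ell$, $k-h\le r$ and $d\le h+r$, which is \ref{secondo}.

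Part \ref{terzo} is the crux, and here I would use the dual apparatus assembled just before the theorem. The degeneracy $h\le k-\ell<k$ of $\gamma^{t-1}$ makes $\check V^{(t-1)}$ degenerate, and I would realise the scroll of \ref{secondo} as a characteristic variety $\Car^{s}_{t-1}(V)$ built from the spaces $C^{(s)}_{t-1,P}(V)$, using Lemma~\ref{lemma} to guarantee that $V$ (indeed $\tilde T^{(t-2)}_P(V)$ and $P$) lies in the appropriate characteristic space. Corollary~\ref{cor:lem} then says precisely that the embedded tangent spaces to this scroll along a general fibre fall into the next lower characteristic space $C^{(s-1)}_{t-1,P}(V)$, whose dimension $N-1-d_{t-1,s-1}$ I would translate, through $d_t-d_{t-1}=k-\ell$ and the structure equations~\eqref{eq:deg}, into the value $d_t-h=d_{t-1}+k-\ell-h$; the parenthetical bound then follows from $d_{t-1}\le k_{t-1}=\binom{k+t-1}{t-1}-1$, and $r\le d\le d_t-h$ is forced because a scroll lies inside the span of its tangent spaces along one fibre. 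The main obstacle is exactly this last dimension count: proving that passing to the $h$-dimensional base cuts the naive span $\tilde T^{(t)}_P(V)$, of dimension $d_t$, down by precisely $h$, which amounts to identifying the right $s$ and computing $d_{t-1,s-1}$ from \eqref{eq:deg}.
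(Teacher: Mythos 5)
Your treatment of part \ref{primo} via Lemma \ref{lemsbagliato} and the identity $d_t=d_{t-1}+\Delta_t+1=d_{t-1}+k-\ell$ is exactly the paper's argument and is correct. The construction of the scroll, however, contains a genuine gap. You build the scroll from the fibres of $\gamma^{t-1}$ and assert $h:=\dim\im\gamma^{t-1}\le k-\ell$ because $J(|I^t|)$ has only $k-\ell$ rows. This conflates two different kernels: the row count gives $\rk J(|I^t|)\le k-\ell$ and hence a kernel of dimension $\ge\ell$ \emph{over the function field} of $\bP(T_P(V))$, whereas $\ker d\gamma^{t-1}_P$ is the fixed linear subspace of \emph{constant} vectors $v$ with $\partial F/\partial v\equiv 0$ for every $F\in|I^t|$ (the common vertex), and that space can vanish even when the matrix drops rank. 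Concretely, take a general surface in $\bP^3$ with $t=2$: then $k=2$, $\dim|II|=0$, so $\ell=1$, but $|II|$ is a rank-two binary quadric (e.g.\ $v_1v_2$, with Jacobian $(v_2,\,v_1)$ of rank $1$ yet empty vertex), and $\gamma^{1}$ has rank $2>k-\ell=1$. So the bound fails for your $h$, and with it part \ref{secondo} and your ``sharper'' version of \ref{primo}. Indeed, the passage from a small fundamental form to a rank drop of its Jacobian is precisely the nontrivial content of Theorem \ref{thm terr gen}; it cannot be invoked for free here.

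The paper sidesteps this by taking the scroll over the image of the $t$-th Gauss map $\gamma^{t}$, not $\gamma^{t-1}$: with $h:=\dim\im\gamma^t$, the fibre $(\gamma^t)^{-1}(\Pi)$ lies in $\Pi=\tilde T^{(t)}_Q(V)$ and spans the $\bP^r$, and the inequality $h\le k-\ell$ is obtained from duality rather than from the Jacobian, namely from $\dim\check V^{(t)}=h+N-1-d_t$ combined with Lemma \ref{lemma}, \eqref{a}, which forces $d_{t-1}\le \dim C^{(1)}_{t,P}(V)=d_t-h=d_{t-1}+k-\ell-h$. For part \ref{terzo} the paper then uses Lemma \ref{lemma}, \eqref{b}, to place $\bP^r=\langle\Phi_{k-h}(\Pi)\rangle$ inside the linear space $C^{(t)}_{t,P}(V)$, hence $S(\Sigma^h_r)\subset\Car^t_t(V)$, and Corollary \ref{cor:lem} to place $\tilde T_R S(\Sigma^h_r)$ inside $C^{(t-1)}_{t,P}(V)$, whose dimension is bounded by $d_t-h$. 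Your sketch points at the right dual machinery but with the indices shifted to $\check V^{(t-1)}$, and, as you yourself concede, leaves the decisive dimension count open; as written, the argument does not close.
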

\begin{proof}

\ref{primo}:
 By hypothesis, Lemma \ref{lemsbagliato} and Corollary \ref{cor_dimensione ff} (and with the above notations)
\begin{multline*}
\dim \Tan^{t-1} (V)\leq \dim T_P( \Tan^{t-1} (V))\leq \\
\leq\dim(\tilde T^{(t)}_P(V))=d_{t-1}+\Delta_t+1\le \expdim\Tan^{t-1} (V)-\ell.
\end{multline*}

\ref{secondo}: Let---as above---$\gamma^t\colon V\dasharrow \mathbb G (N,d_t)$ the $t$-th Gauss map.
Let $h:=\dim \im (\gamma^t)$, so that $k-h$ is the dimension of the general fibre of $\gamma^t$.
Let $\Phi_{k-h}(\Pi):=(\gamma^t)^{-1}(\Pi)$ be a general fibre; this is just the set of points $Q\in V$ for which $\Pi={\tilde T}^{(t)}_Q(V)$.
$\Phi_{k-h}(\Pi)$ generates a linear space $\bP^r$, $k-h\le r \le d_t$.
Let us consider the scroll over  $\im (\gamma^t)=:\Sigma^h_r$ of these spaces,
$S(\Sigma^h_r)$.  Just by definition, $V\subset S(\Sigma^h_r)$.

Let $\check V^{(t)}\subset {\bP^N}^*$ be the $t$-th dual variety variety;
we have
\begin{equation*}
\dim(\check V^{(t)})=h+N-1-\dim(\tilde T^{(t)}_P(V)).
\end{equation*}

Moreover, by Lemma \ref{lemma}, \eqref{a}, $\tilde T^{(t-1)}_P(V))\subset C^{(1)}_{t,P}(V)$, so that (by Corollary \ref{cor_dimensione ff})
\begin{equation}\label{eq:equazione}
    d_{t-1}\leq N-1-d_{t,1}=d_t-h=d_{t-1}+\delta_t+1-h=d_{t-1}+k-\ell-h,
 \end{equation}
    and therefore $h\leq k-\ell$.

\ref{terzo}: We have, by Lemma \ref{lemma}, \eqref{b}, $Q\in C_{t,P}^{(t)}(V)$ if $Q\in \Phi_{k-h}(\Pi)$, $\Pi=\tilde T^{(t)}_P(V)$.
Since $C_{t,P}^{(t)}(V)$ is a linear space, we have that $\langle\Phi_{k-h}(\Pi)\rangle=\bP^r \subset C_{t,P}^{(t)}(V)$,
and therefore $S(\Sigma^h_r)\subset \Car^t_t(V)$. Finally, apply Corollary \ref{cor:lem}, to get that, if $R\in \bP^r$ is a general point,
$\tilde T _R S(\Sigma^h_r)\subset C^{(t-1)}_{t,P}(V)$ and moreover, since $\dim (\check V^{(t)})=N-1-d_t+h$, we have
that
\begin{equation*}
\dim C^{(t-1)}_{t,P}(V)\le d_t-h=d_{t-1}+ k-\ell-h\le \binom{k+t-1}{t-1}-1+k-\ell-h.
\end{equation*}

\end{proof}

Let us see some applications of this theorem.

\begin{ex}\label{ex:1}
Clearly, when $h=0$ we have that $V$ is contained in a $\bP^{d_{t}}$. For example, this is the only possibility when $k=1$ i.e the case of the curves; but in
this case we can say even more: we have $\ell=1$, $k-\ell=0=h$ and from \eqref{eq:equazione} we deduce that the curve is contained in a $\bP^{d_{t-1}}$.
So, if the theorem holds for $k=1$ and $t=2$, $V=\bP^1$ and for $k=1$ and $t=3$, $V$ is a plane curve, etc.
\end{ex}

\begin{ex}
More generally, if $\ell=k$, $h=0=k-\ell$, and also in this case, thanks to \eqref{eq:equazione},
we deduce that $V$ is contained in a $\bP^{d_{t-1}}$. In particular, if the theorem holds for $t=2$, we deduce $V=\bP^k$.
\end{ex}

\begin{ex}
Let us pass to the next case $\ell=k-1$; in this case $h=0,1$. If $h=0<1=k-\ell$, thanks to \eqref{eq:equazione}, we infer that $d_{t}=d_{t-1}+1$. Hence
$V\subset \bP^{d_{t-1}+1}$ by Example \ref{ex:1}. For $t=2$, we deduce that $V$ is a hypersurface in a $\bP^{k+1}$.

If  $h=1=k-\ell$, again by \eqref{eq:equazione}, we infer that $d_{t}=d_{t-1}+1$. Since, $k-1\le r\le d_t-1$ for $t=2$, we have that  $k-1\le r\le d\le k$, but
we cannot have $r=k$, since otherwise we would have that $V=\bP^r= S(\Sigma^h_r)$ and for it we would have $h=0$. Therefore, $r=k-1$, $\Phi_{k-h}(\Pi)=\bP^{k-1}$
 and $V$ is a developable $\bP^{k-1}$-bundle.
\end{ex}

Our result generalising Theorem \ref{2ca} is the following.
\begin{thm} \label{thm terr gen}
Let $V \subseteq \bP^N$  be a  $k$-dimensional irreducible variety.
  $V$ has $t$-th osculating defect $o_t=\ell>0$ and the
$(t+1)$-th fundamental form has dimension at least
$k-\ell$ if and only if the  Jacobian matrix of the $(t+1)$-th fundamental form of $V$ has rank $k-\ell$.
 \end{thm}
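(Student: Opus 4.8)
The plan is to mirror the classical case $t=1$ (Theorem \ref{2ca}) by computing the osculating defect $o_t$ through the differential of the $t$-th Gauss map, which by Theorem \ref{13} is exactly the $(t+1)$-th fundamental form $|I^{t+1}|$. Concretely, I would fix a Darboux frame as in \eqref{eq:darbu} and compute the tangent space to $\Tan^t(V)$ at a general point, reducing the equality $o_t=\ell$ to a rank condition on a matrix built from $I^{t+1}$, and then identify that matrix with the Jacobian matrix $J(|I^{t+1}|)$ by means of Theorem \ref{thm:I}.

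First I would carry out the frame computation. Let $P\in V$ be general and let $Q=\sum_{i=0}^{d_t}c_iA_i$ be a general point of $\tilde T^{(t)}_P(V)$; as $(P,Q)$ describes the incidence variety of $\widetilde{\Tan^t}(V)$, the point $Q$ sweeps out $\Tan^t(V)$. Differentiating $Q$ and using the structure equations \eqref{derivate_frame_eq}, the motion of $Q$ modulo $\tilde T^{(t)}_P(V)$ is governed by $\sum_{i\le d_t}c_i\sum_{j>d_t}\omega_{ij}(v)A_j$ for $v\in T_P(V)$. The vanishing \eqref{eq:omega} kills every term except those with $i$ in the top level $d_{t-1}<i\le d_t$, so that
\begin{equation*}
T_Q\Tan^t(V)=\tilde T^{(t)}_P(V)+\Bigl\langle\, \textstyle\sum_{i=d_{t-1}+1}^{d_t}c_i\sum_{j>d_t}\omega_{ij}(v)A_j \ :\ v\in T_P(V)\,\Bigr\rangle .
\end{equation*}
Hence $\dim\Tan^t(V)=d_t+\rho$, where $\rho$ is the rank of the matrix $M(c)_{j\gamma}=\sum_{i}c_i\,\omega_{ij}(A_\gamma)$, and therefore $o_t=k-\rho$ (using $\expdim\Tan^t(V)=k+d_t$ in the relevant range $N\ge k+d_t$).

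Next I would identify $\rho$ with $\rk J(|I^{t+1}|)$. The generators $V_\alpha$ of $|I^{t+1}|$ are the degree-$(t+1)$ polynomials of \eqref{eq:tfond}, and the computation in the proof of Theorem \ref{thm:I} gives $\partial V_\alpha/\partial\omega_\gamma=(t+1)\sum_{i}V^{(t)}_i\,\omega_{i\alpha}(A_\gamma)$, where the $V^{(t)}_i$ are the generators of the $t$-th fundamental form. Thus evaluating the Jacobian at a point $w\in\bP^{k-1}$ reproduces $M(c)$ with $c_i=V^{(t)}_i(w)$, i.e. with $c=I^t(w^t)$; so $\rk J(|I^{t+1}|)$ is the rank of $M(c)$ for $c$ on the image cone of the $t$-th fundamental form, while $\rho$ is the rank of $M(c)$ for $c$ general in $\im I^t$. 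The hard part is precisely to show these two ranks coincide, since a priori the rank of $M(c)$ can drop on the (Veronese-type) subvariety $\{I^t(w^t)\}$ even though that subvariety spans $\im I^t$. This is where the symmetry of the fundamental form enters: relation \eqref{eq:tanti} with $s=t$ encodes the total symmetry of the coefficients $q_{I,\alpha}$ of $I^{t+1}$, and I would use it to prove that $M(c)$ already attains its generic rank along $\{I^t(w^t)\}$, so that $\rho=\rk J(|I^{t+1}|)$ and hence $o_t=k-\rk J(|I^{t+1}|)$.

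Granting this identity, $o_t=\ell$ becomes equivalent to $\rk J(|I^{t+1}|)=k-\ell$, and it remains only to account for the hypothesis $\Delta_{t+1}\ge k-\ell$ in both implications. Since $J(|I^{t+1}|)$ has $\Delta_{t+1}+1$ rows and $k$ columns, a rank equal to $k-\ell$ already forces $\Delta_{t+1}+1\ge k-\ell$; conversely the strict inequality $\Delta_{t+1}\ge k-\ell$ (rather than the bound $\Delta_{t+1}\ge k-\ell-1$ that Lemma \ref{lemsbagliato} together with Corollary \ref{cor_dimensione ff} always yields when $o_t=\ell$) is exactly what excludes the scroll regime of Theorem \ref{mainthm} and pins the rank at $k-\ell$ instead of leaving it limited by a deficiency of generators. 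I expect the principal obstacle to be the multilinear-algebra step of the previous paragraph, namely reconciling the generic rank of $M(c)$ with its rank along the fundamental-form image, powered by the symmetry relation \eqref{eq:tanti}.
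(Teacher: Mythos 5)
Your strategy---compute $T_Q\Tan^t(V)$ in a Darboux frame and read the defect off as the corank of a matrix $M(c)$ built from $I^{t+1}$---is genuinely different from the paper's. The paper writes a general point of $\tilde T^{(t)}_P(V)$ as $B_0=A_0+\sum_{|I|=t}\lambda^{(I)}\mathbf x^I$, observes that $o_t=\ell$ produces $\ell$ independent Laplace equations of order $t+1$ whose associated polynomials are the reducible forms $(\sum_j a_{\alpha,j}v_j)(\sum_{|I|=t}\lambda^{(I)}\mathbf v_I)$ with a \emph{common, general} degree-$t$ factor, and then invokes apolarity (Proposition \ref{prop_apolarity}) to conclude that every member of $|I^{t+1}|$ has identically vanishing partial derivative with respect to $v_1,\dotsc,v_\ell$; the Jacobian matrix then has exactly $k-\ell$ nonzero columns, which is a pointwise-uniform statement rather than a statement about the rank at a general point. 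Your frame computation of $\dim\Tan^t(V)=d_t+\rk M(c)$ is correct and is essentially the paper's first step in different clothes (your $c$ is the paper's $\lambda$).

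The genuine gap is the one you flag yourself and do not close: you need $\rk M(c)$ for $c$ general in $\im I^t$ to coincide with $\rk M(I^t(w^t))$ for $w$ general, since only the latter equals $\rk J(|I^{t+1}|)$. The rank of $M(\cdot)$ can only drop on Zariski-closed subsets, and for $t\ge 2$ the locus $\{I^t(w^t)\}$ is a proper closed (Veronese-type) subvariety of $\bP(\im I^t)$, so the fact that it spans proves nothing; ``I would use \eqref{eq:tanti} to prove this'' is a placeholder, not an argument, and this is a nontrivial statement about totally symmetric tensors. It is precisely the step the paper's apolarity mechanism bypasses: because the common factor $\sum_{|I|=t}\lambda^{(I)}\mathbf v_I$ has general coefficients (it is not a $t$-th power), apolarity of $|I^{t+1}|$ to $v_\alpha\cdot G$ for $G$ general of degree $t$ forces $\partial F/\partial v_\alpha\equiv 0$ for all $F\in|I^{t+1}|$ and $\alpha=1,\dotsc,\ell$, controlling the Jacobian everywhere at once. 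A secondary weakness: your proposed unconditional identity $o_t=k-\rk J(|I^{t+1}|)$ would make the hypothesis $\Delta_{t+1}\ge k-\ell$ redundant in one implication and is in tension with it in the other (rank $k-\ell$ only forces $\Delta_{t+1}\ge k-\ell-1$); your closing paragraph gestures at the ``scroll regime'' of Theorem \ref{mainthm} but does not actually show where that hypothesis enters, whereas in the paper it is what pins the rank at exactly $k-\ell$ rather than merely $\le k-\ell$.
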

\begin{proof}
Let us fix as usual a Darboux frame for $V$ as in \eqref{eq:darbu}; if $P \in V$ is a general point, then, by Definition \ref{t^t def},
\begin{equation*}
\tilde T^t_P(V)=\langle (\frac{d^{|I|}A_0}{d v_1^{i_1}\dotsc d v_{k}^{i_{k}}})_{|I|\le t}\rangle,
\end{equation*}
with the convention that $d^0A_0=A_0$,
therefore, we can fix a  Darboux frame $\{B_0;B_1,\dotsc,B_d;B_{d+1},\dotsc,B_N\}$ ($d:=\dim \Tan^{(t)} (V) = \expdim \Tan^{(t)}(V)-\ell$) for $\Tan^{(t)}(V)$ centred at $Q \in \tilde T^t_P(V)$,
 where
$B_1,\dotsc,B_d$ span $T_Q(\Tan^{(t)}(V))$, and so
\begin{equation}\label{eq:b}
B_0=A_o\sum_{|I|=t}\lambda^{(I)}\frac{d^{|I|}A_0}{d v_1^{i_1}\dotsc d v_{k}^{i_{k}}}.
\end{equation}

Saying $\dim \Tan^{(t)} (V) = \expdim \Tan^{(t)}(V)-\ell$  means that there are $\ell$ linearly independent linear homogeneous relations between
the (first) partial derivatives of $B_0$ with respect to the $v_j$'s and the $\lambda^{(I)}$'s:
\begin{align*}
\sum_{j=1}^ka_{\alpha,j}\frac{\partial B_0}{\partial v_j}+\sum_{|I|= t}a_{\alpha,I}\frac{\partial B_0}{\partial \lambda^{(I)}}&=0 & \alpha=1,\dotsc,\ell;
\end{align*}
then, by \eqref{eq:b},
\begin{align*}
\sum_{j=1}^ka_{\alpha,j}(\sum_{|I|= t}\lambda^{(I)}\frac{d^{|I|+1}A_0}{d v_j d \mathbf v^I})&\equiv 0 & \alpha=1,\dotsc,\ell,  & \mod T^{(t)}_P(V)
\end{align*}
i.e. these relations are indeed relations between the partial derivatives up to order $t+1$ of $A_0$, and we can think of it as a system
of Laplace equations of order $t+1$:
\begin{align*}
\sum_{j=1}^ka_{\alpha,j}(\sum_{|I|= t}\lambda^{(I)}\mathbf x^{I+j}) &=0 & \alpha=1,\dotsc,\ell,
\end{align*}
and their associated polynomials are all reducible
\begin{align}\label{eq:apose}
(\sum_{j=1}^ka_{\alpha,j}v_j)(\sum_{|I|= t}\lambda^{(I)}\mathbf v_I)&=0 & \alpha=1,\dotsc,\ell,
\end{align}
with the same factor of degree $t$, $\sum_{|I|= t}\lambda^{(I)}\mathbf v_I$. Since these homogeneous polynomials are independent,
the  $\ell$ linear forms $\sum_{j=1}^ka_{\alpha,j}v_j$, $\alpha=1,\dotsc,\ell$, are independent. In particular, up to a change of coordinates,
it is not restrictive to suppose that these forms are $v_1,\dotsc,v_\ell$.

By Proposition \ref{prop_apolarity}, we have that the $(t+1)$-fundamental form is the apolar system associated to \eqref{eq:apose}; in particular, we have
that all the partial derivatives of the  $(t+1)$-fundamental form with respect to  $v_1,\dotsc,v_\ell$, are zero, from which we get that
the rank of the Jacobian is $k-\ell$.

Since all the above can be reverted, the vice versa easily follows.

\end{proof}

From this Theorem starts our next goal, i.e. to classify varieties with tangent or---more generally---higher osculating defect, which we will pursue in a subsequent
paper. This goal relies on the study
of linear systems of quadrics, or, more generally, of higher degree hypersurfaces, with Jacobian matrix of rank less than expected.



\end{document}